\documentclass{amsart}%
\usepackage{amsfonts}%
\usepackage{amsmath}%
\setcounter{MaxMatrixCols}{30}%
\usepackage{amssymb}%
\usepackage{graphicx, epic,eepic, hyphenat, epsf}

\input xy
\xyoption{all}

\theoremstyle{definition}

\newtheorem{lemma}{Lemma}

\newtheorem{remark}{Remark}
\numberwithin{equation}{section}
\newcommand{\brak}[1]{\langle #1\rangle}

\DeclareMathOperator{\Hom}{Hom}

\DeclareMathOperator{\id}{Id}

\def\co{\colon\thinspace} 
\def\mf{\mathfrak}

%
\newskip\stdskip                      
\stdskip=6.6pt plus3.3pt minus3.3pt    
\setlength{\textheight}{7.5in}          
\setlength{\textwidth}{5.2in}         
\flushbottom                           
\setlength{\parindent}{0pt}            
\setlength{\parskip}{\stdskip}
\setlength{\medskipamount}{\stdskip}
\setlength{\mathsurround}{0.8pt}     
\setlength{\labelsep}{0.75em} 


\begin{document}

\title{On the $\mf{sl}(2)$ foam cohomology computations}
\author{Carmen Caprau}

\address{Department of Mathematics, California State University, Fresno, CA 93740 USA}
\email{ccaprau@csufresno.edu}

\date{}
\subjclass[2000]{57M27}
\keywords{Categorification, Cobordisms, Foams, Knot Invariants, Tangles, Webs}

\begin{abstract}
We show how to use Bar-Natan's `divide and conquer' approach to computation to efficiently compute the universal $\mf{sl}(2)$ dotted foam cohomology groups, even for big knots and links. We also describe a purely topological version of the $\mf{sl}(2)$ foam theory, in the sense that no dots are needed on foams. 
\end{abstract}
\maketitle

\section{Introduction}

In \cite{CC2} the author constructed the universal $\mf{sl}(2)$ link cohomology that uses dotted foams modulo local relations, and that categorifies the $\mf{sl}(2)$ polynomial. This was done in the spirit of Bar-Natan's~\cite{BN1} local approach to Khovanov homology on one side, and Khovanov's~\cite{Kh2} and Mackaay-Vaz's~\cite{MV} $\mf{sl}(3)$ link homology on the other side (see also~\cite{CC0, CC1} for a less general construction). The invariant of a link (or tangle) is a complex of graded free $\mathbb{Z}[i, a, h]$-modules, up to homotopy, where $a$ and $h$ are formal variables and $i$ is the primitive fourth root of unity. This theory corresponds to a certain Frobenius algebra structure defined on $\mathbb{Z}[i, a, h, X]/(X^2 -hX -a),$ and for the case of $a = h = 0$ it gives rise to an isomorphic version of the $\mf{sl}(2)$ Khovanov homology~\cite{BN0, Kh1}. The main improvement of the theory in~\cite{CC2} with respect to the original Khovanov homology and Bar-Natan's   work in~\cite{BN1} is the well-defined functorialily property with respect to link cobordisms relative to boundaries, with no sign indeterminacy (for details we refer the reader to~\cite{CC0, CC2}; we remark that similar construction and results to that in~\cite{CC0, CC1} were worked out by Clark, Morrison and Walker~\cite{CMW}, with no formal variables involved). Therefore it might be worthy to have a clear understanding of the $\mf{sl}(2)$ foam cohomology. 

Bar-Natan explained in~\cite{BN2} that his extension to tangle of the Khovanov homology via  cobordisms modulo relations yields an improvement in computational efficiency of the Khovanov homology groups. The purpose of this paper is to modify and adapt to foams the tools used in~\cite{BN2}, to obtain quick $\mf{sl}(2)$ dotted foam cohomology computations that otherwise would have taken more than a lifetime. We also give a description of a purely topological variant of this foam theory, variant in which no dots are required on cobordisms. 

In Section~\ref{sec:univKh} we briefly review the main results of the work done in~\cite{CC2}. We start Section~\ref{sec:fast computations} by providing the tools needed to obtain efficient computations and explaining the algorithm. Then we show that the tools we use yield an easy proof of the invariance of the foam cohomology under Reidemeister moves. After that, we consider the figure eight knot and apply the algorithm to it. Section~\ref{sec:no dots} contains the modified version of our dotted foam theory.

\section{Review of the universal $\mf{sl}(2)$ foam cohomology}\label{sec:univKh}

Given a tangle diagram $T$ with boundary $B,$ we construct a `formal complex' $[T]$ that lies in the additive category $\textit{Foams}_{/\ell}(B)$ whose objects are formally graded resolutions of $T$---called \textit{webs}---and whose morphisms are formal linear combinations of \textit{singular} cobordisms---called \textit{foams}---whose tops and bottoms are resolutions and whose side boundaries are $B \times I,$ modulo certain local relations. We explain these concepts below.

Each crossing of $T$ is replaced by one of the planar pictures below:
\[ \raisebox{-13pt}{\includegraphics[height=0.4in]{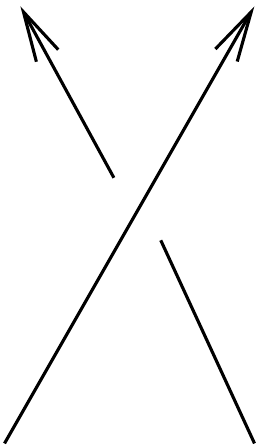}} \longrightarrow \raisebox{-13pt} {\includegraphics[height=0.4in]{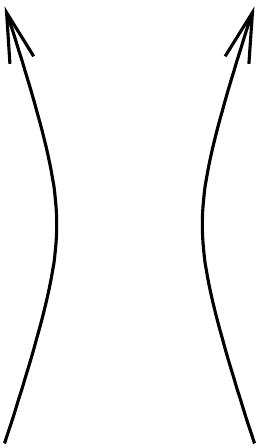}} \quad \text{and} \quad \raisebox{-13pt} {\includegraphics[height=0.4in]{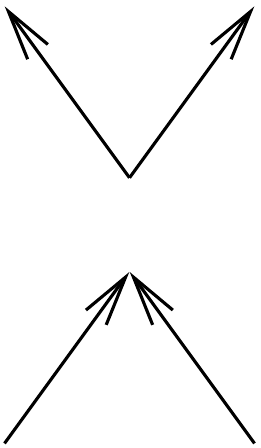}} \]
A diagram $\Gamma$ obtained by resolving all crossings of $T$ is a disjoint union of webs. A web with boundary $B$ is a planar graph $\Gamma$---properly embedded in a disk $\mathcal{D}^2$---with bivalent vertices near which the two incident edges are oriented either towards the vertex or away from it, and with univalent vertices that lie on the boundary of the disk $\mathcal{D}^2.$ Webs without vertices are also allowed.
 
The $\mf{sl}(2)$-link invariant is given by $P_2(D) = \sum_{\Gamma} \pm q^{\alpha(\Gamma)}\brak{\Gamma},$ where the sum is over all resolutions of $D,$ the exponents $\alpha(\Gamma)$ are determined by relations  
\begin{align}\raisebox{-13pt}{\includegraphics[height=0.4in]{poscrossing.pdf}} \,= \,q\,\,\,\,\,\raisebox{-13pt} {\includegraphics[height=0.4in]{orienres.pdf}} - q^2\,\,\,\,\,\raisebox{-13pt} {\includegraphics[height=0.4in]{singres.pdf}}\hspace{2cm}
\raisebox{-13pt}{\includegraphics[height=0.4in]{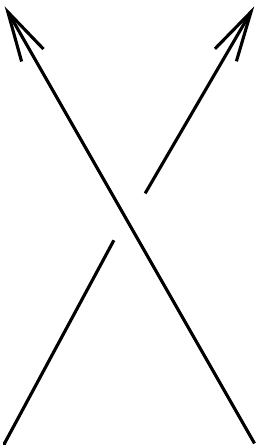}} \,= \,q^{-1}\,\raisebox{-13pt} {\includegraphics[height=0.4in]{orienres.pdf}} - q^{-2}\, \raisebox{-13pt} {\includegraphics[height=0.4in]{singres.pdf}}\end{align}
and the \textit{bracket polynomial} $\brak{\Gamma}$ associated to a closed web $\Gamma$ (its boundary $B$ is empty) is evaluated via the skein relations
\begin{align}
\brak{\raisebox{-5pt}{\includegraphics[height=0.2in]{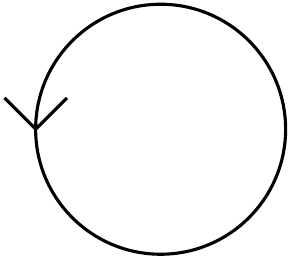}} \bigcup \Gamma} = (q + q^{-1}) \brak{\Gamma} =\brak{\raisebox{-5pt}{\includegraphics[height=0.2in]{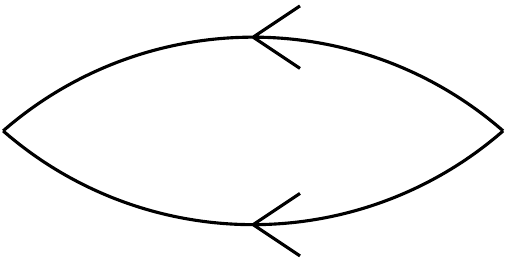}} \bigcup \Gamma} \\
\brak{\raisebox{-5pt}{\includegraphics[height=0.12in]{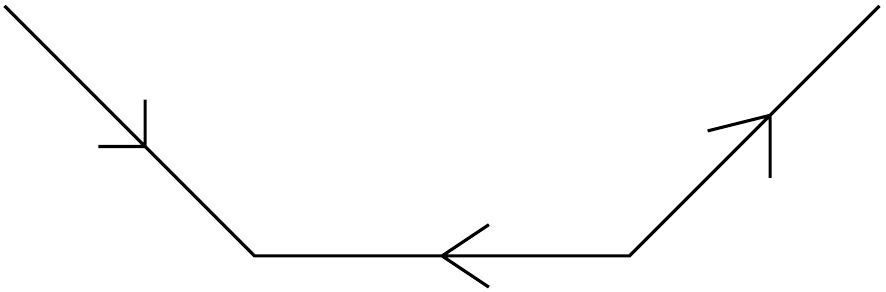}}} = 
\brak{\raisebox{-5pt}{\includegraphics[height=0.12in]{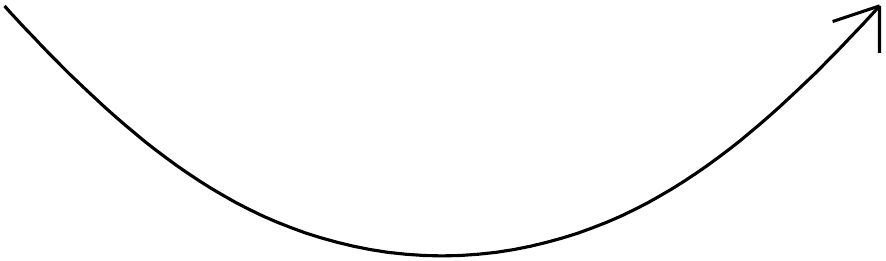}}}, \quad
\brak{\raisebox{-5pt}{\includegraphics[height=0.12in]{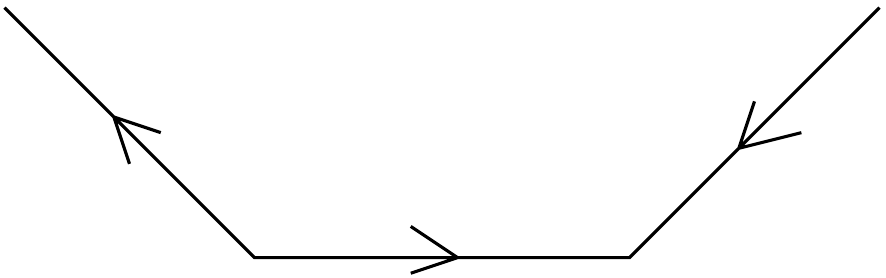}}} = 
\brak{\raisebox{-5pt}{\includegraphics[height=0.12in]{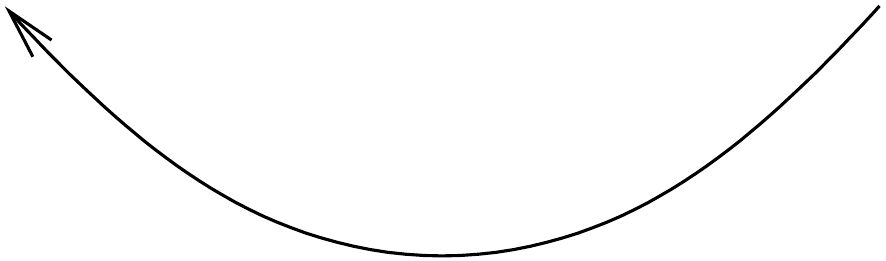}}}.\end{align}

A foam is an abstract cobordism between two webs $\Gamma_0$ and $\Gamma_1$ with boundary $B,$ regarded up to boundary-preserving isotopy. We read foams as morphisms from bottom to top by convention, and we compose them by placing one cobordism on top the other. Foams have \textit{singular arcs} (and/or \textit{singular circles}) where orientations disagree, and near which the facets incident with a given singular arc are compatibly oriented, inducing an orientation on that arc. Specifically, the orientation of singular arcs is as in the figure below, which shows examples of \textit{singular saddles}.
\[\includegraphics[height=.55in]{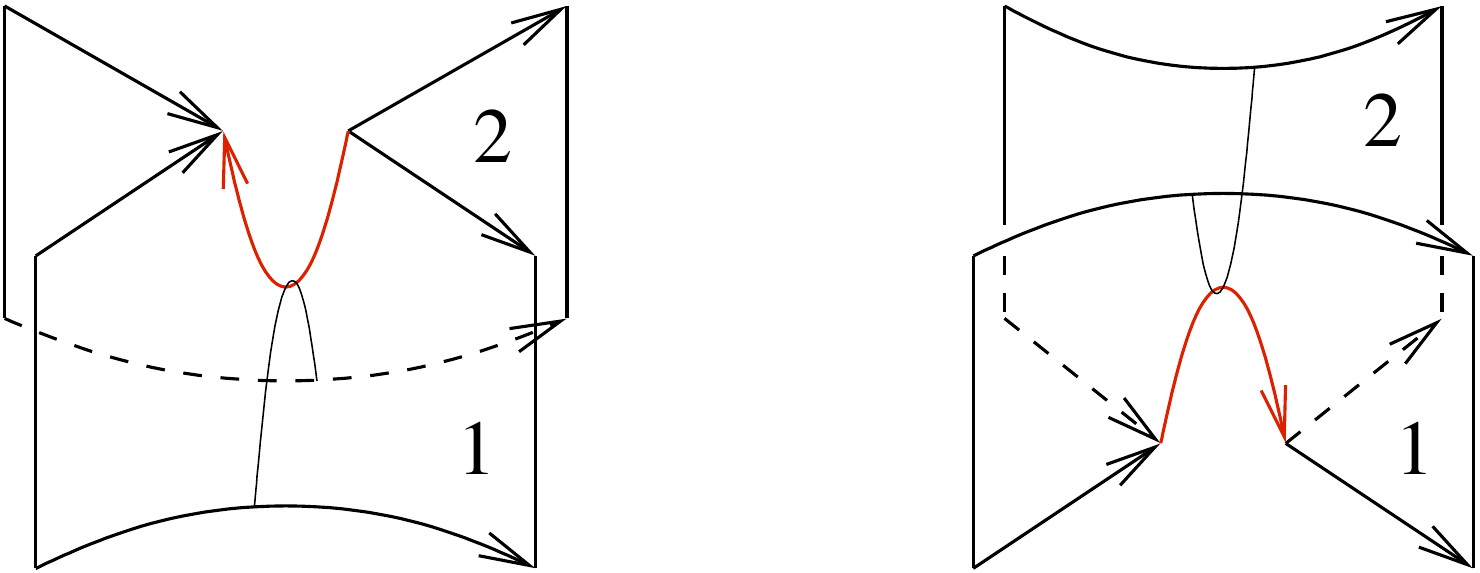}\]
For each singular arc of a foam, there is an ordering of foam's facets that are incident with it, in the sense that one of the facets is the \textit{preferred facet} for that singular arc. In some cases, we indicate facets' ordering by using labels 1 and 2, respectively, as shown above. In other cases, to prevent cluster, we draw a singular arc using a continuous red curve if the preferred facet for that singular arc is at its left in a given plane projection of a foam---where the concept of `left' or `right' is given by the orientation of the arc; otherwise, the singular arc is represented by a dashed red curve. Finally, foams can have dots that are allowed to move freely along the facet they belong to, but can't cross singular arcs.

We mod out the set of foams by the local relations $\ell$ = (2D, SF, S, UFO) below.
$$\xymatrix@R=2mm
{
\text{(2D)} \hspace{.2in}\raisebox{-5pt}{
\includegraphics[height=0.2in]{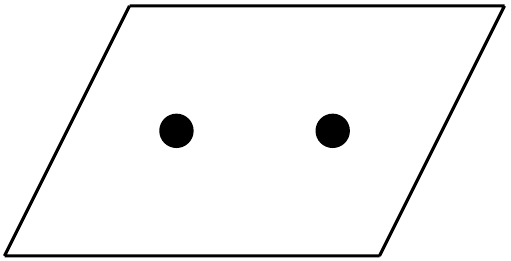}}=h\raisebox{-5pt}{\includegraphics[height=0.2in]{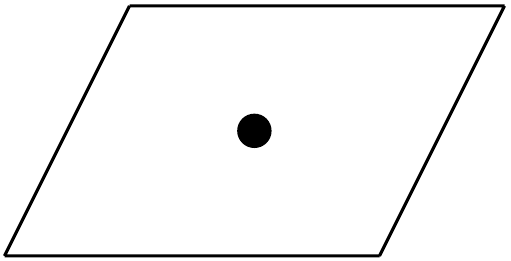}} + a\raisebox{-5pt}{\includegraphics[height=0.2in]{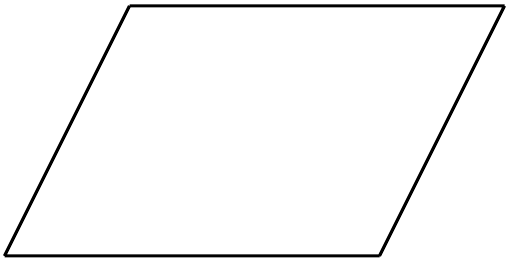}},\ \ \ \ \ \ \ 
\raisebox{-10pt}{\includegraphics[width=0.35in]{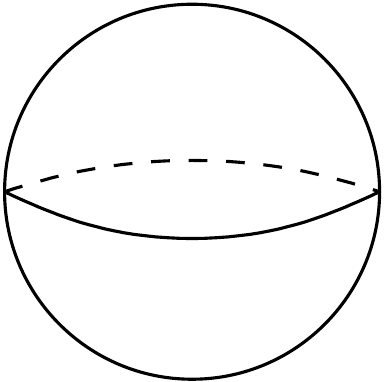}} =0,\quad
\raisebox{-10pt}{\includegraphics[width=0.35in]{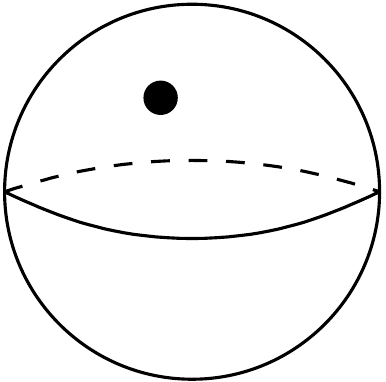}} =1
&\text{(S)} \\
\raisebox{-20pt}{
\includegraphics[height=0.6in]{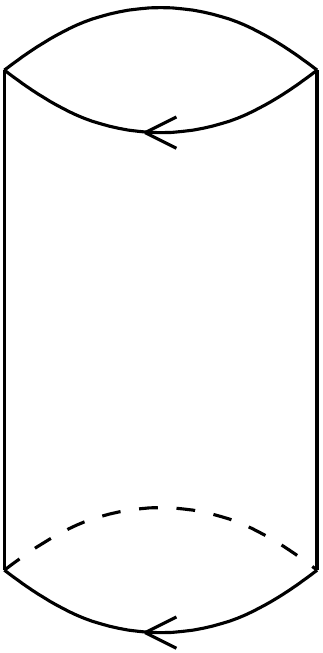}} =
\raisebox{-20pt}{
\includegraphics[height=0.6in]{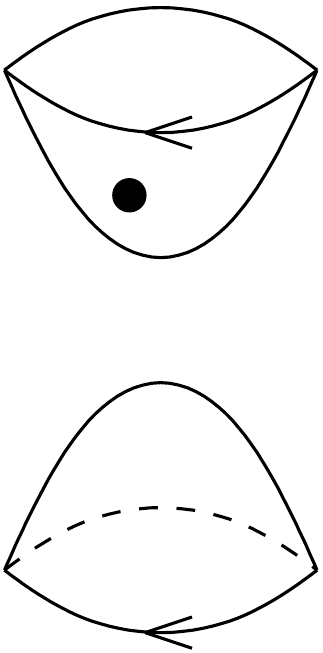}}+
\raisebox{-20pt}{
\includegraphics[height=0.6in]{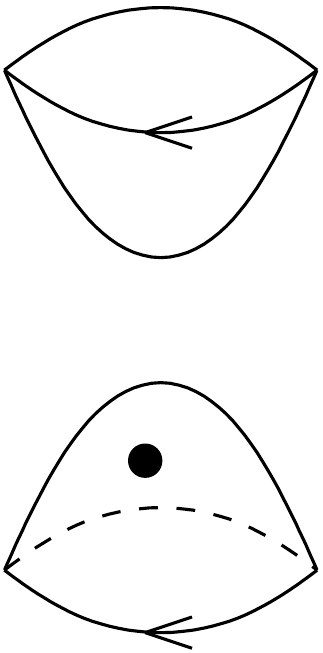}}-h
\raisebox{-20pt}{
\includegraphics[height=0.6in]{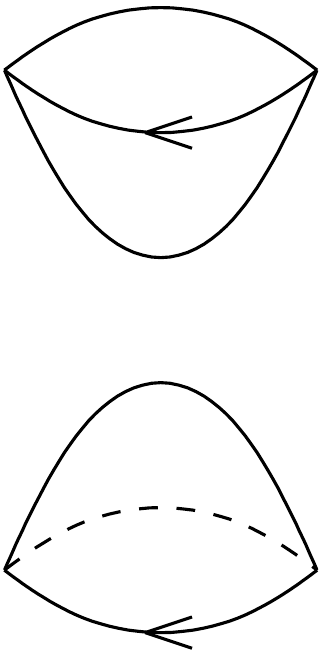}}
&\text{(SF)} \\
\raisebox{-10pt}{\includegraphics[width=0.5in]{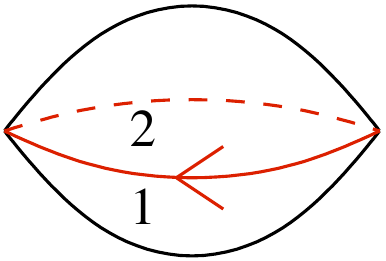}}=0=
\raisebox{-10pt}{\includegraphics[width=0.5in]{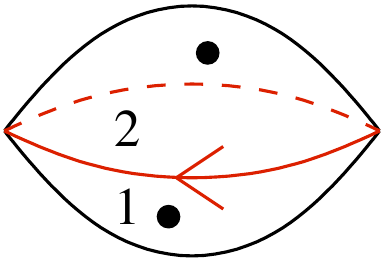}}\quad \text{and} \quad
\raisebox{-10pt}{\includegraphics[width=0.5in]{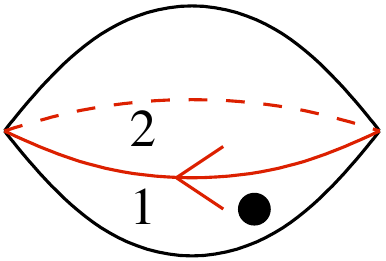}}=i= 
-  \raisebox{-10pt}{\includegraphics[width=0.5in]{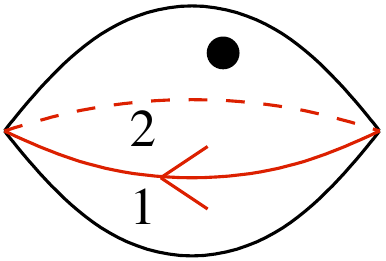}}
&\text{(UFO)} 
}$$
Relations $\ell$ imply the following useful \textit{curtain identities}:
\begin{align}
\raisebox{-13pt}{\includegraphics[height =0.5in]{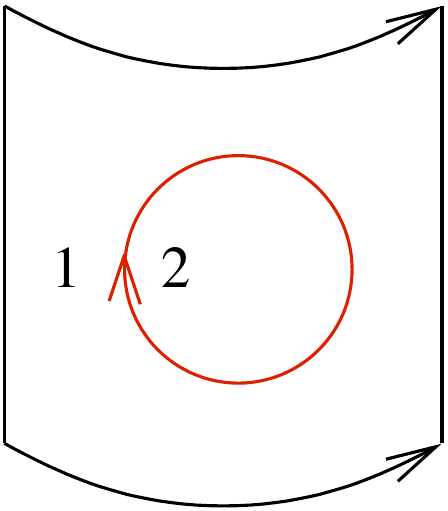}}&= i
\,\raisebox{-13pt}{\includegraphics[height=0.5in]{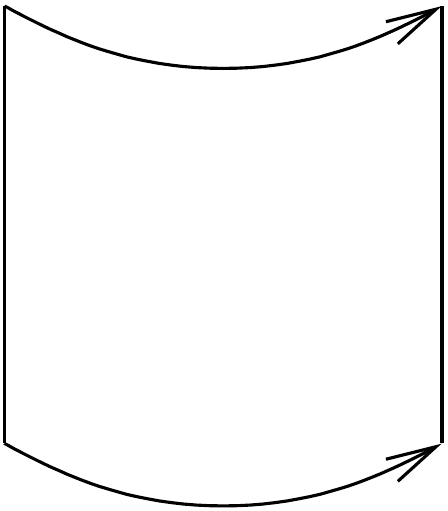}} \hspace{2cm}
\raisebox{-13pt}{\includegraphics[height =0.5in]{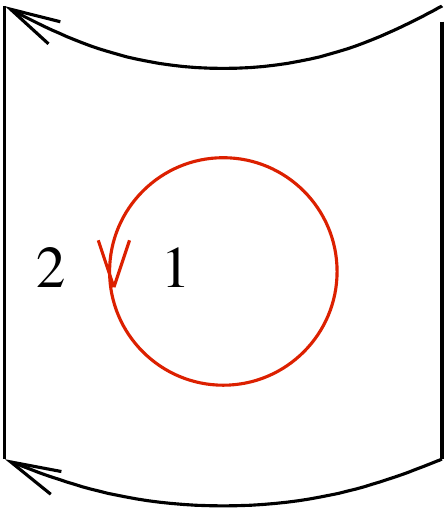}} =-i
\,\raisebox{-13pt}{\includegraphics[ height=0.5in]{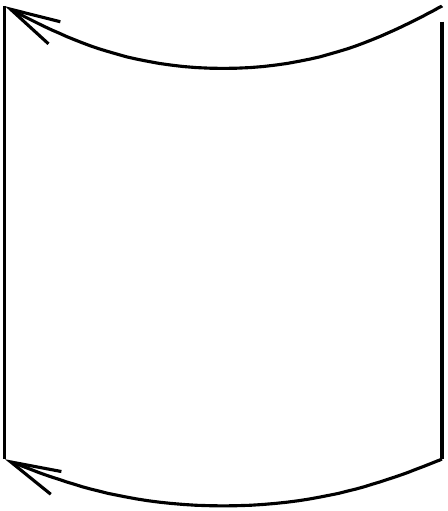}} \\
\raisebox{-18pt}{\includegraphics[height=0.6in]{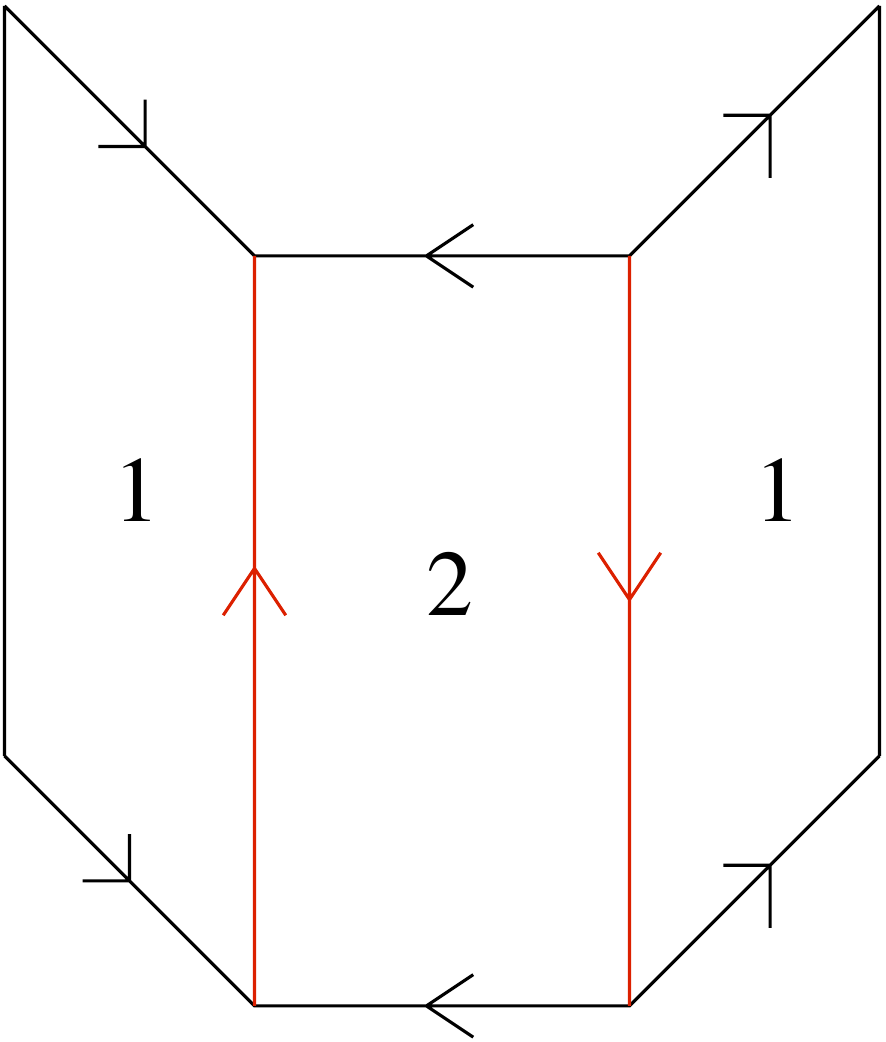}} &= -i \,
\raisebox{-18pt}{\includegraphics[height=0.6in]{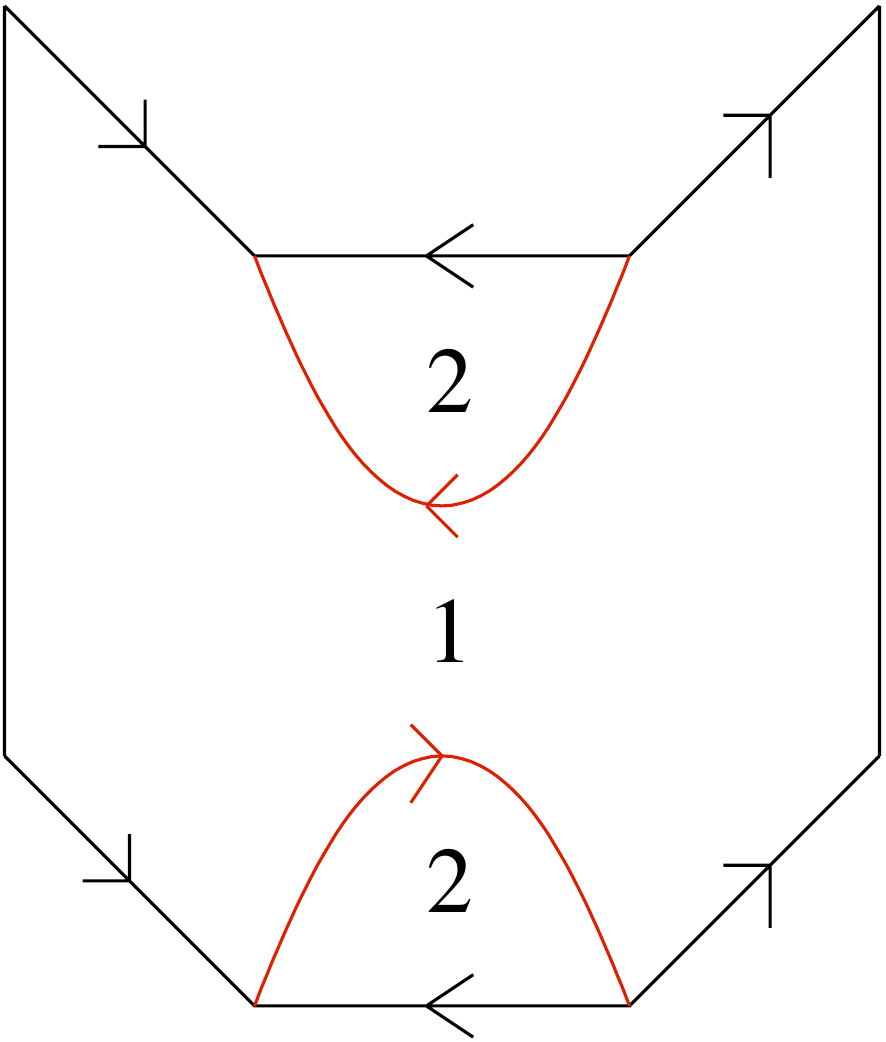}} \hspace{1cm}
\raisebox{-18pt}{\includegraphics[height=0.6in]{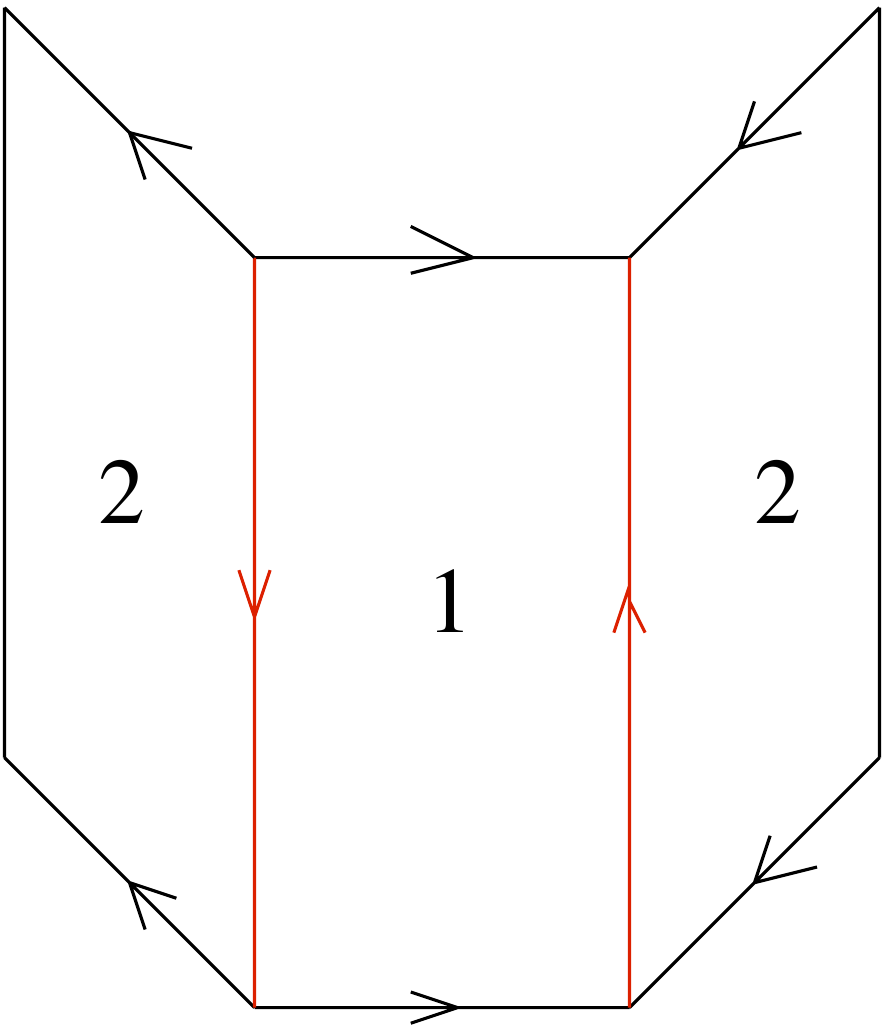}}= i\,
\raisebox{-18pt}{\includegraphics[height=0.6in]{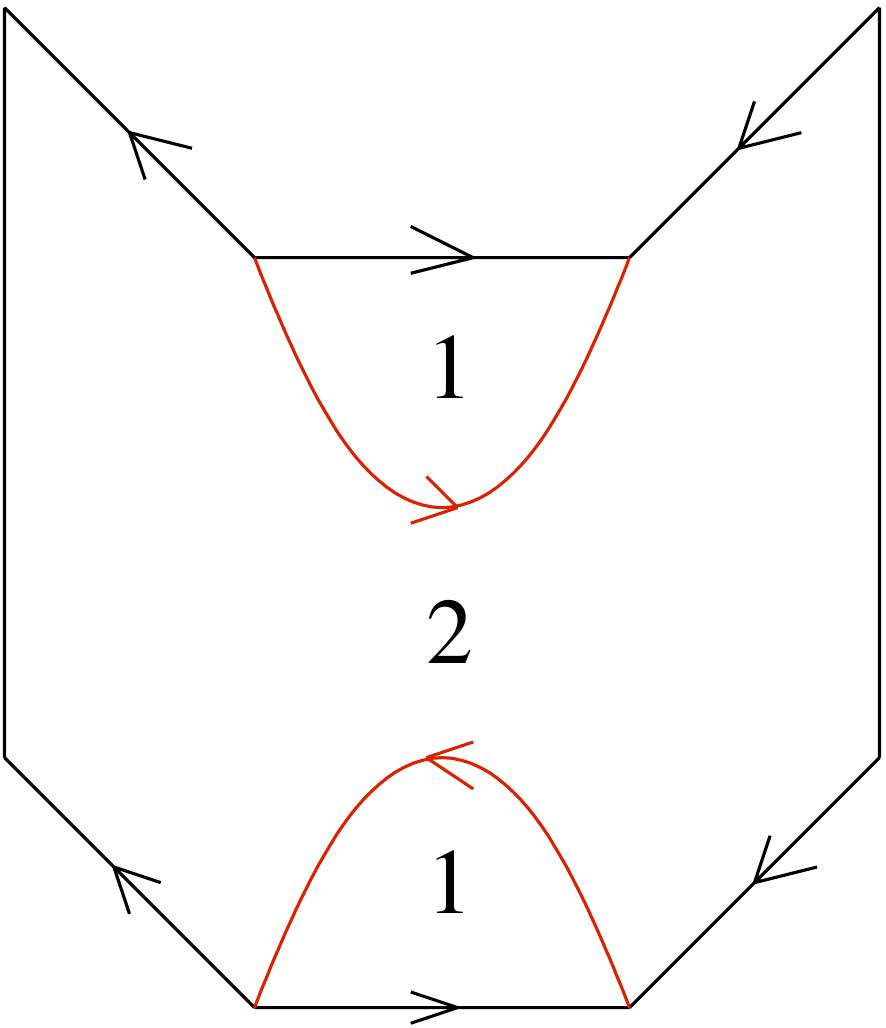}}
\end{align}
as well as the \textit{cutting neck} relation (CN), where the dots belong to the preferred facets:
$$\xymatrix@R=2mm{
\raisebox{-18pt}{\includegraphics[height=0.6in]{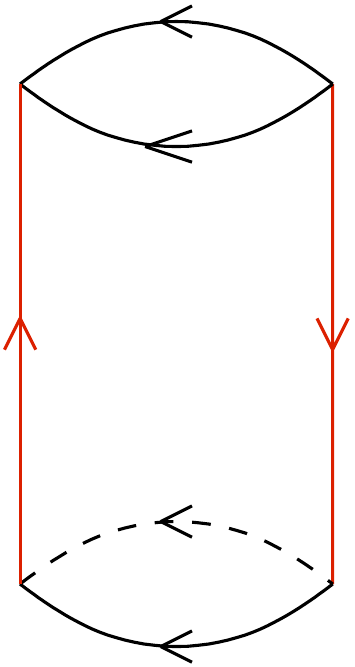}}= -i
\raisebox{-18pt}{\includegraphics[height=0.6in]{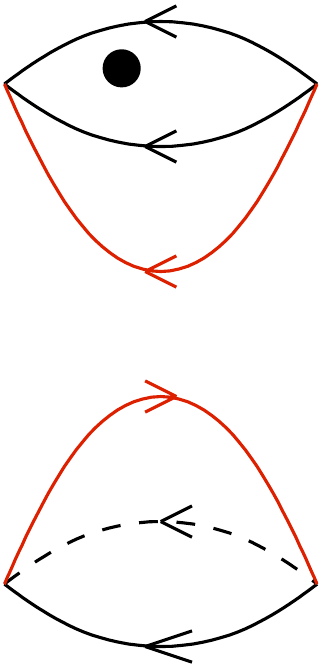}} -i
\raisebox{-18pt}{\includegraphics[height=0.6in]{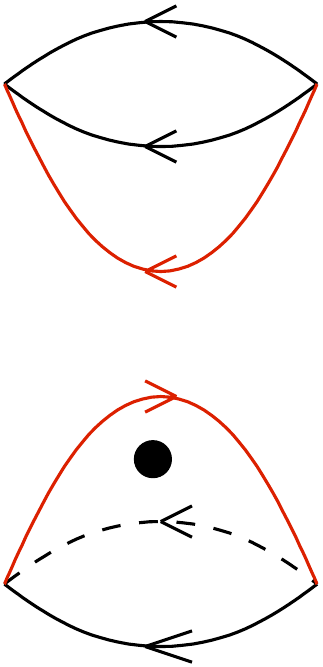}} + hi
\raisebox{-18pt}{\includegraphics[height=0.6in]{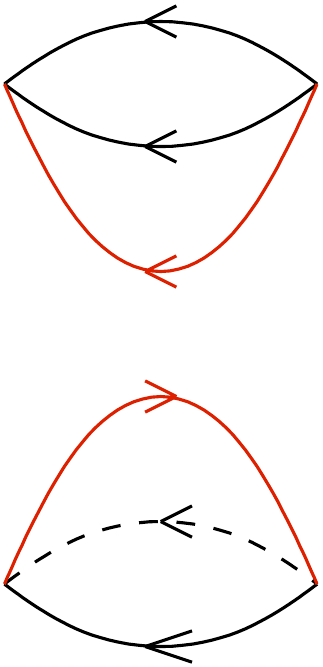}}
& \text{(CN)}
}$$
Local relations $\ell$ also give rules for \textit{exchanging dots} between neighboring facets, namely:
\begin{align}\label{exchanging dots}
\raisebox{-18pt}{\includegraphics[height=.5in]{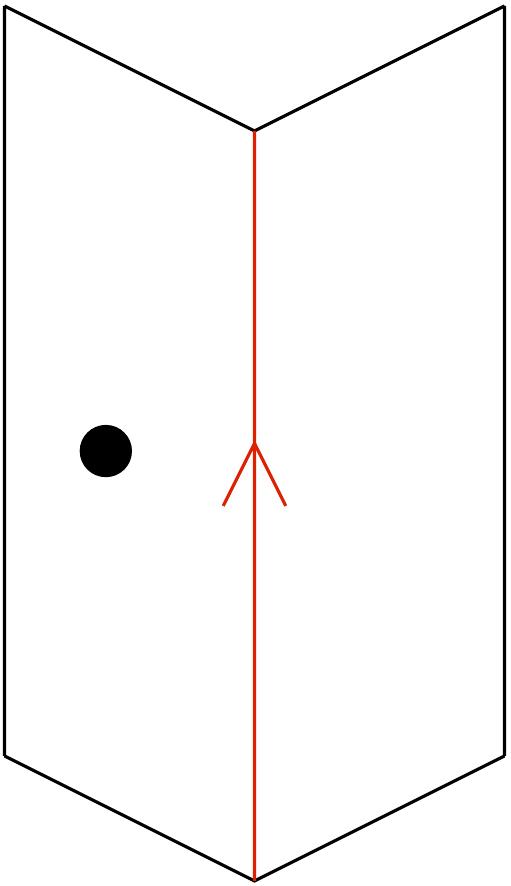}} +
\raisebox{-18pt}{\includegraphics[height=.5in]{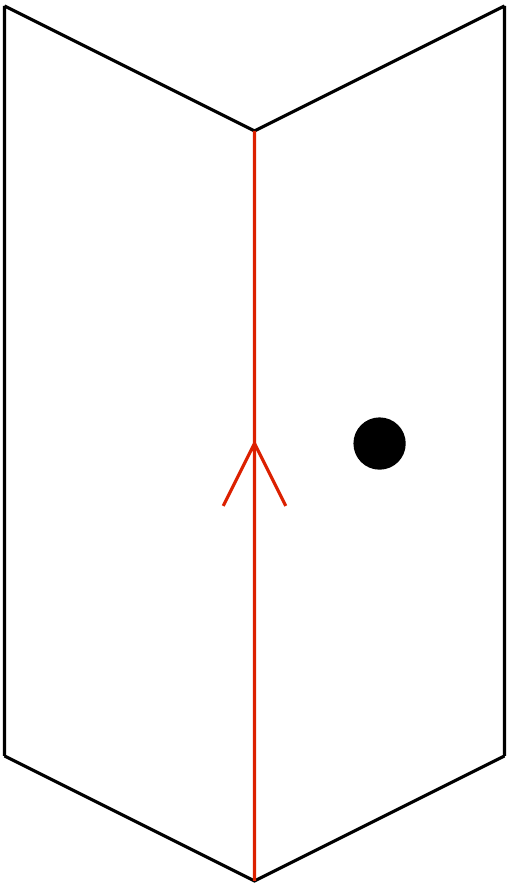}} = h\,\,\raisebox{-18pt}{\includegraphics[height=.5in]{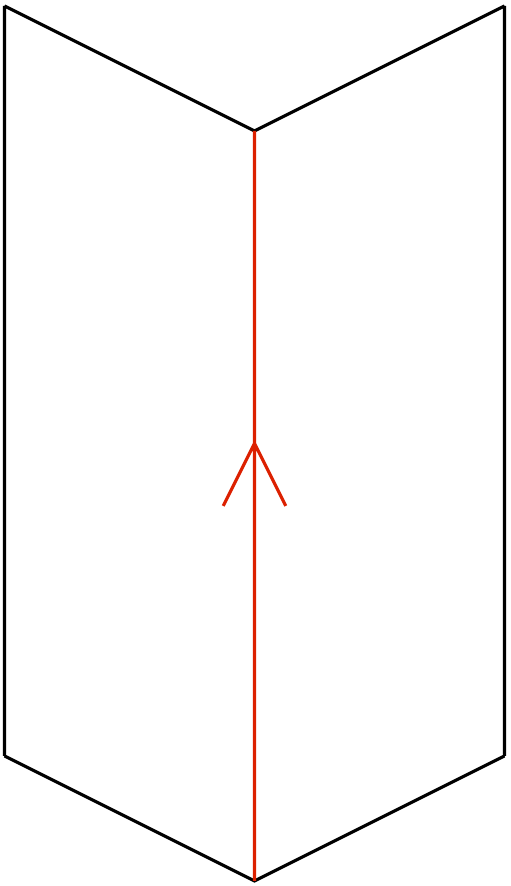}} \hspace{2cm}
\raisebox{-18pt}{\includegraphics[height=.5in]{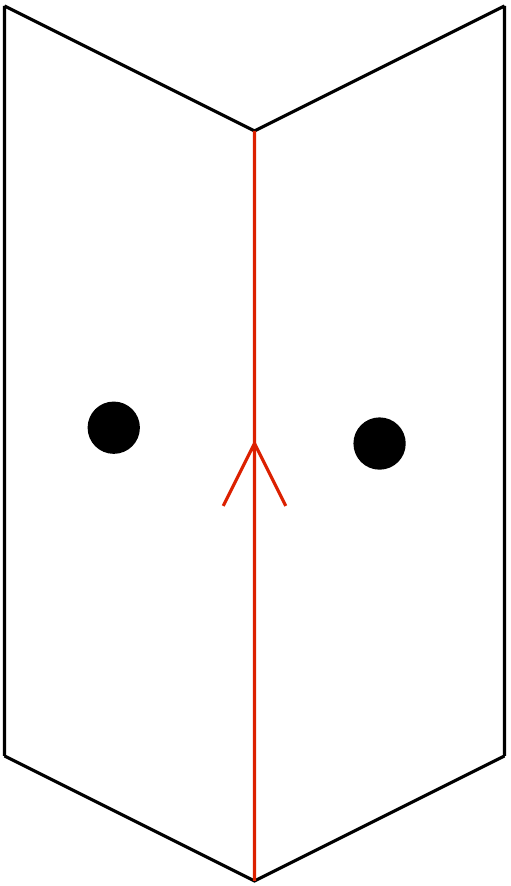}} = -a\, \,
 \raisebox{-18pt}{\includegraphics[height=.5in]{exch.pdf}}\end{align} 
 
 \begin{lemma}\label{lemma:isomorphisms}
Local relations $\ell$ imply that the following are isomorphisms in $\textit{Foam}_{/\ell}:$ 
\begin{align}\label{isomorphisms}
\raisebox{-25pt}{\includegraphics[height=.8in]{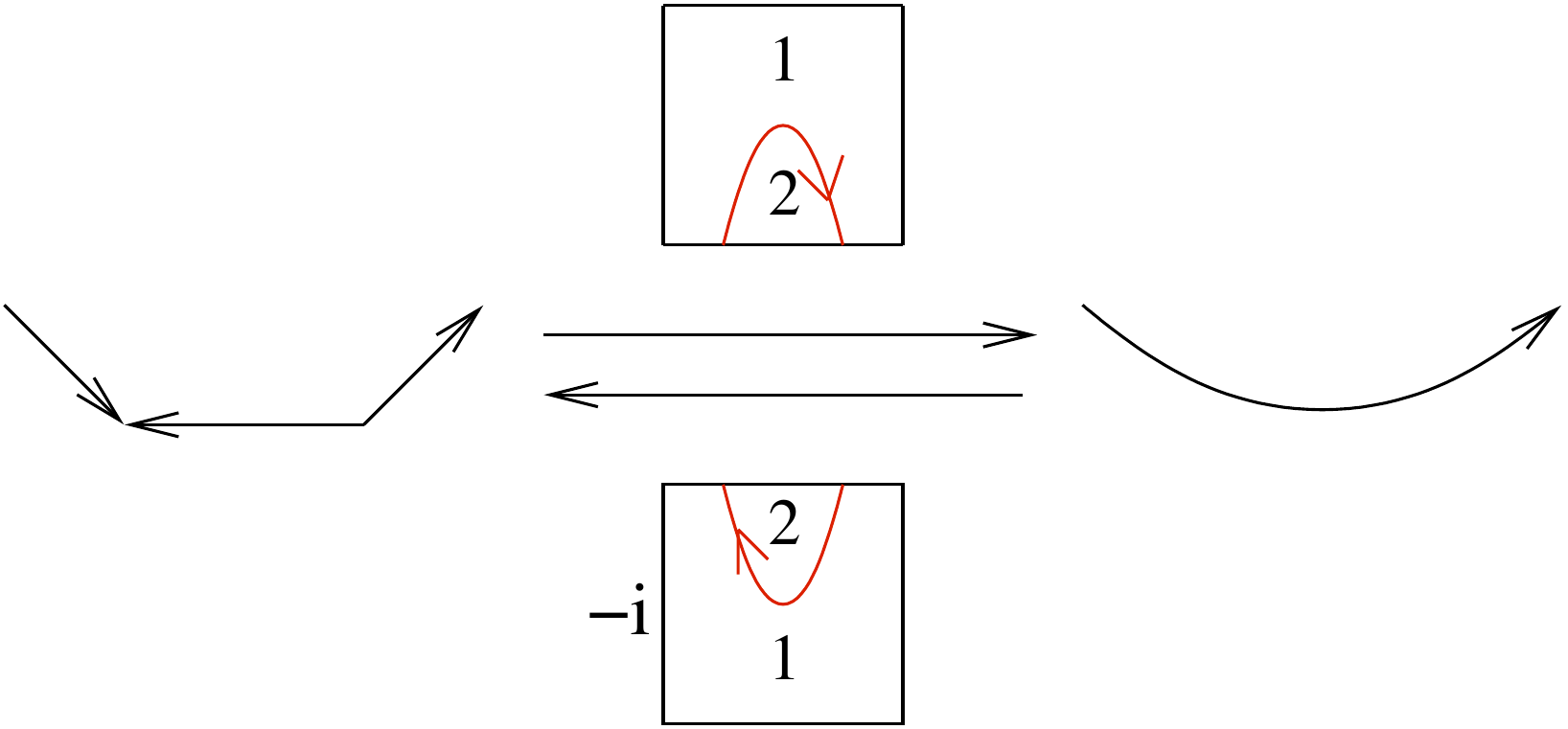}}
 \hspace{1.5cm}
\raisebox{-25pt}{\includegraphics[height=.8in]{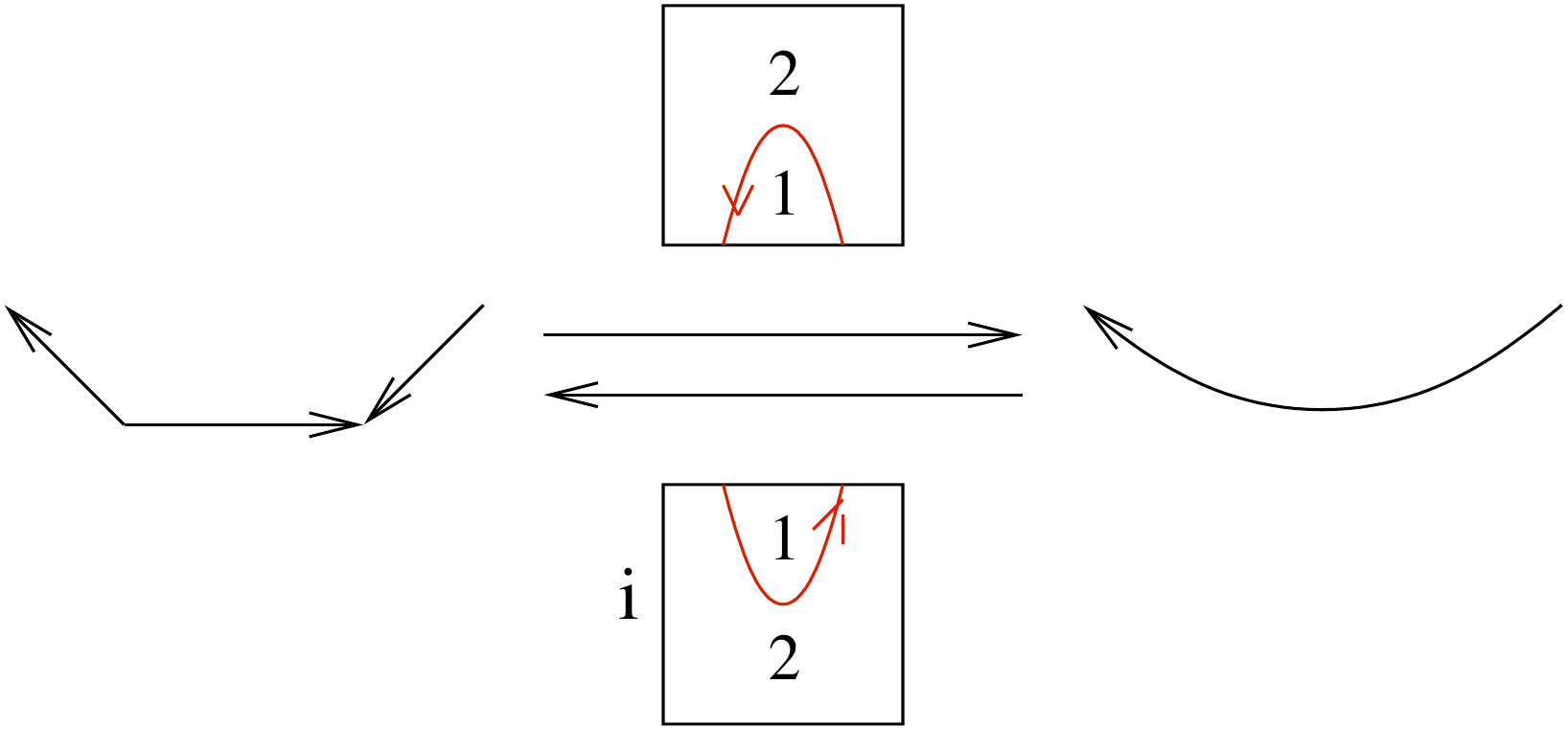}}\end{align}
\end{lemma}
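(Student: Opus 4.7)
The plan is to prove each of the two isomorphisms in (2.6) by exhibiting an explicit two-sided inverse foam and then reducing the two compositions to the identity foams on the corresponding webs using the local relations $\ell$ and their consequences.

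First, I would identify the candidate inverses. Each foam depicted in (2.6) is a natural ``zip'' or ``unzip'' type singular cobordism that either creates or eliminates a pair of bivalent vertices (and hence a singular arc) on an arc of a web; this mirrors the bracket-level identities in (2.3). The obvious candidate for the inverse is the vertically reflected foam, possibly rescaled by a power of $i$ to compensate for the signs that the UFO and curtain relations will force out. I would write these candidates down explicitly, fixing for each singular arc which facet is preferred, so that subsequent computations are unambiguous.

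Next, I would compose the foam with its candidate inverse in each of the two orders. In each case the composition is a cobordism from a web $\Gamma$ to itself that contains either a closed tube (for the composition that goes through the ``arc'' web) or a singular ``curtain'' configuration (for the composition that goes through the web with the two bivalent vertices). The tube composition is simplified by applying the cutting-neck relation (CN), which replaces it with three terms: two configurations having a single dot on the preferred facet and one configuration having an $h$-weighted dotless piece. The leftover closed spheres and ``UFO'' components of each term are then evaluated via (S) and (UFO), leaving at most one nonzero summand that equals the identity foam on $\Gamma$ up to a power of $i$. The curtain composition is handled using the curtain identities (2.4)--(2.5), together with the dot-exchange rules (\ref{exchanging dots}) if a stray dot needs to be moved off the singular arc; again one obtains the identity foam up to a constant.

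The genuinely delicate part of the argument is bookkeeping of signs and of the preferred-facet data. Each application of CN, UFO, or the curtain identities introduces a factor of $\pm i$ or an $h$-term, and the sign depends on which facet is labelled preferred and on the orientations of the singular arcs created or destroyed by the composition. I would therefore fix a consistent convention at the start---matching the dashed/solid red-curve convention described in the review section---and then select the scalar normalization of the candidate inverse so that the two compositions come out equal to the identity on the nose. Once these scalars are chosen compatibly (and the choice is forced by either of the two compositions, with the other providing a consistency check), the lemma follows.
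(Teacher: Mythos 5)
The paper does not actually prove Lemma~\ref{lemma:isomorphisms}: it appears in the review section (Section~\ref{sec:univKh}), with the proof deferred to~\cite{CC2}. What the paper does do is place the curtain identities~(2.4)--(2.5) immediately before the lemma, and those identities are precisely the computations that make the stated foams invertible: each curtain identity reduces a composition of a ``zip''-type foam with its reflection to $\pm i$ times a simpler curtain (the identity on the relevant web). Your proposal correctly identifies the skeleton of this argument---take the vertically reflected foam as the candidate inverse, rescale by a power of $i$, compose in both orders, and use the derived identities to collapse each composition to the identity---and also correctly flags that the delicate point is the sign bookkeeping coming from the preferred-facet/orientation data on singular arcs. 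So the strategy matches what the paper implicitly intends.

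One inaccuracy worth noting: you say that one of the two compositions produces a ``closed tube'' to be attacked with the cutting-neck relation (CN). For these particular zip/unzip foams between an arc and a web with two bivalent vertices, neither composition closes off a tube; both are curtain-type cobordisms from a web to itself carrying a singular bump, and both are handled directly by the curtain identities (2.4)--(2.5) rather than by CN. This does not damage the overall plan---the dominant idea (reflected inverse plus $i$-normalization plus derived relations) still goes through---but the specific appeal to CN for one of the compositions is off target and would not be the step actually used.
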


 The formal complex $[T]$ is an object of the category \textit{Kof} := Kom(Mat($\textit{Foams}_{/\ell}(B)$)) of complexes of formal direct sums of objects in $\textit{Foams}_{/\ell}(B)$ and its cohomology $\mathcal{H}(T)=\oplus_{i,j \in \mathbb{Z}} \mathcal{H}^{i,j} (T)$ is a doubly graded invariant up to homotopy (here $i$ is the cohomological grading and $j$ is the polynomial grading).

\section{Efficient computations}\label{sec:fast computations}

In this section we apply to our setting Bar-Natan's `divide and conquer' approach to computations discussed in ~\cite{BN2}, to obtain an efficient algorithm for calculating the homology groups $\mathcal{H}^{i,j}(L)$ associated to a certain link diagram $L$, that otherwise would have taken a quite amount of time to evaluate. The key is to work locally, that is, to cut the link into subtangles, compute the invariant for each subtangle and finally assembly the obtained invariants into the invariant of $L,$ via the tensor product operation induced on formal complexes by the horizontal composition operation on the canopoly $\textit{Foams}_{/\ell}.$ To really obtain an improvement of computational efficiency, we simplify the complexes over the category $\textit{Foams}_{/\ell}$ before assembling, using the \textit{delooping} and \textit{Gaussian elimination} tools borrowed from~\cite{BN2} but adapted to our geometric picture and local relations.

\subsection {The tools and method}

Lemma~\ref{lemma:delooping} below is similar to~\cite[Lemma 4.1]{BN2} but uses our local relations, while Lemma~\ref{lemma:Gaussian elimination} is exactly~\cite[Lemma 4.2]{BN2} therefore we omit its proof.

\begin{lemma}\label{lemma:delooping}
(Delooping) Given an object of the form $S \cup \Gamma$ in the category $\textit{Foams}_{/\ell}$, where $\Gamma =  \raisebox{-4pt}{\includegraphics[height=0.2in]{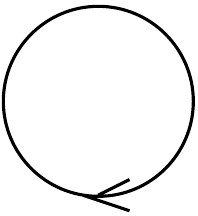}}$ or $\Gamma = \raisebox{-3pt}{\includegraphics[height=0.18in]{circle2sv.pdf}}$, it is isomorphic in Mat($\textit{Foams}_{/\ell}$) to the direct sum $S\{+1\} \oplus S\{-1\}$ in which $\Gamma$ is removed. This can be written symbolically as 
$$\raisebox{-4pt}{\includegraphics[height=0.2in]{unknot-clockwise.pdf}}\, \cong\, \emptyset \{+1\} \oplus \emptyset \{-1\} \quad \text{and} \quad \raisebox{-3pt}{\includegraphics[height=0.18in]{circle2sv.pdf}} \,\cong \, \emptyset \{+1\} \oplus \emptyset \{-1\}.$$
\end{lemma}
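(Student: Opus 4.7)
Following the strategy of Bar-Natan's delooping argument adapted to our category, I would exhibit the isomorphism explicitly via four foam morphisms: two cups $\phi_\pm \co \emptyset\{\pm 1\} \to \Gamma$ (disks with $\Gamma$ as top boundary) and two caps $\psi_\pm \co \Gamma \to \emptyset\{\pm 1\}$ (disks with $\Gamma$ as bottom boundary), each decorated with zero or one dot and rescaled by a coefficient in $\mathbb{Z}[i,h]$. The natural initial guess is: $\phi_+$ an undotted cup, $\phi_-$ a dotted cup, $\psi_-$ an undotted cap, $\psi_+$ a dotted cap, with the exact scalars to be pinned down by the relations.

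The verification reduces to three identities. The diagonal compositions $\psi_\sigma \circ \phi_\sigma$ should equal the identity of $\emptyset\{\sigma\}$; these are one-dot spheres, hence equal to $1$ by (S). The off-diagonal compositions $\psi_\sigma \circ \phi_{-\sigma}$ should vanish; one of them is an undotted sphere and therefore $0$ by (S), while the other is a two-dot sphere and so equals $h$ by (S) combined with (2D). To cancel this stray $h$, one of $\phi_-$ or $\psi_+$ must carry an $h$-correction that subtracts $h$ times the corresponding undotted foam---a standard maneuver in Bar-Natan's original argument. Finally, the closure identity $\phi_+ \circ \psi_+ + \phi_- \circ \psi_- = \id_\Gamma$ is exactly the cutting neck relation (CN); the coefficients $-i,\,-i,\,+hi$ appearing there dictate the overall normalization by powers of $i$ in the four morphisms.

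For the second circle $\Gamma = \raisebox{-3pt}{\includegraphics[height=0.18in]{circle2sv.pdf}}$, I would avoid redoing the computation by first invoking Lemma~\ref{lemma:isomorphisms} (or equivalently the curtain identities shown above) to identify this bivalent-vertex web with the unknotted circle in $\textit{Foams}_{/\ell}$ up to a unit involving $i$, and then pre- and post-composing the cups and caps of the first case with that isomorphism. The main obstacle is purely bookkeeping: getting the $i$ and $h$ coefficients to make all three identities hold simultaneously, especially arranging the $h$-correction so the problematic off-diagonal composition truly vanishes without spoiling the diagonal one. No relation outside of (S), (2D), and (CN) is needed.
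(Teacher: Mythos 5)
Your overall strategy matches the paper's: exhibit explicit cup and cap foams, one of each decorated with a dot, introduce an $h$-correction to kill the stray two-dot sphere, and verify the two composites. The dotted/undotted assignment and the observation that the off-diagonal two-dot sphere evaluates to $h$ via (S) and (2D), forcing an $h$-correction, are all correct and agree with the paper's $\alpha = (\,\text{cap},\, \text{dotted cap}\,)^t$ and $\beta = (\,\text{dotted cup} - h\,\text{cup},\, \text{cup}\,)$.

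However, there is a concrete misattribution in the closure step. For the dotless oriented circle, the composite $\beta\alpha$ is a sum of ordinary (nonsingular) cylinders split into cup/cap pairs, and the identity that collapses this sum to the identity cylinder is the surgery formula (SF), not the cutting-neck relation (CN). Relation (CN) cuts a neck containing a \emph{singular} circle, which is why it carries the coefficients $-i,\,-i,\,hi$; no singular circle appears in the foams for the oriented loop, so no powers of $i$ enter those morphisms. Your claim that the $-i,\,-i,\,hi$ of (CN) ``dictate the overall normalization by powers of $i$ in the four morphisms'' would lead you to wrong coefficients for the oriented circle; in fact the $i$-factors arise only in the second case. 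Correspondingly, the list ``(S), (2D), and (CN)'' is missing (SF), which is the relation doing the real work in the first case.

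For the bivalent-vertex circle, your plan to transport the problem through the isomorphisms of Lemma~\ref{lemma:isomorphisms} rather than recomputing is a legitimate alternative to what the paper does: the paper verifies the second case directly, citing (UFO) and (CN). Your route spares you an explicit use of (UFO) at this stage (it is used implicitly in establishing Lemma~\ref{lemma:isomorphisms}), and, once carried out carefully with the $\pm i$ units from the curtain identities, produces the same cup/cap pairs the paper gives, namely $(\,\text{capsa},\,\text{capsad}\,)^t$ and $(-i\,\text{cupsad} + hi\,\text{cupsa},\, -i\,\text{cupsa}\,)$. Either route is fine; just be explicit about tracking the grading shifts and the unit through the conjugation so that the resulting isomorphism still targets $\emptyset\{+1\}\oplus\emptyset\{-1\}$.
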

\begin{proof}
The desired isomorphisms are given in Figure~\ref{delooping}.
\begin{figure}
\includegraphics[height=1in]{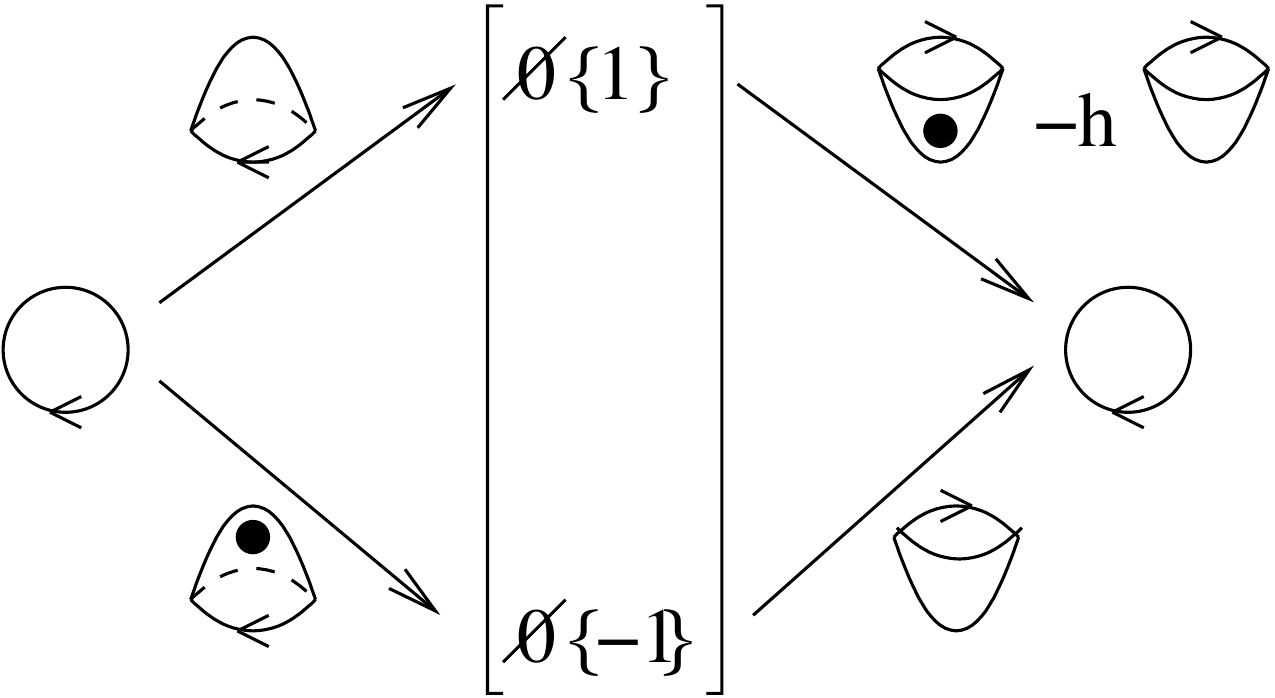} \qquad \includegraphics[height=1in]{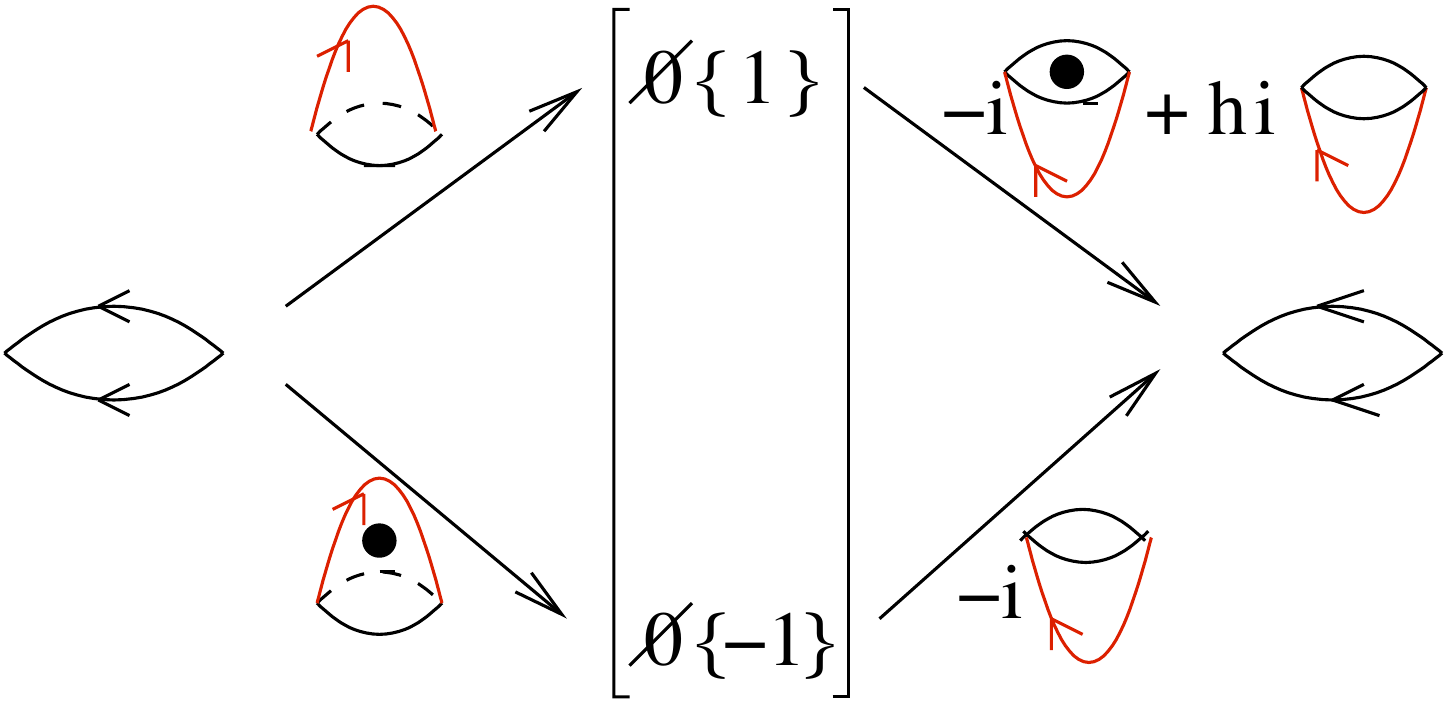}
\caption{Delooping isomorphisms} \label{delooping}
\end{figure}
Let us first show that $ \alpha = (\,\raisebox{-5pt}{\includegraphics[height=.25in]{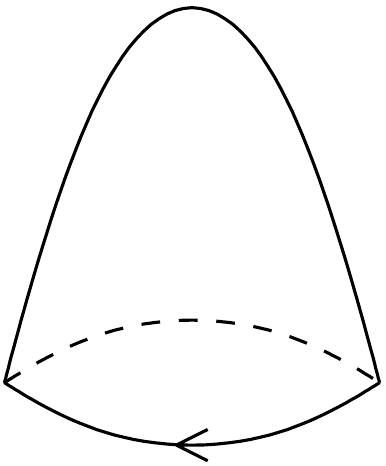}},\raisebox{-5pt}{\includegraphics[height=.25in]{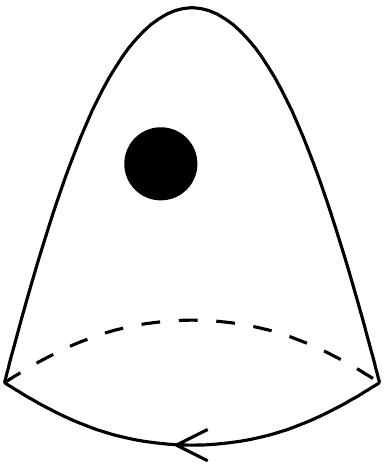}}\,)^t$ and $\beta = (\,\raisebox{-5pt}{\includegraphics[height=.25in]{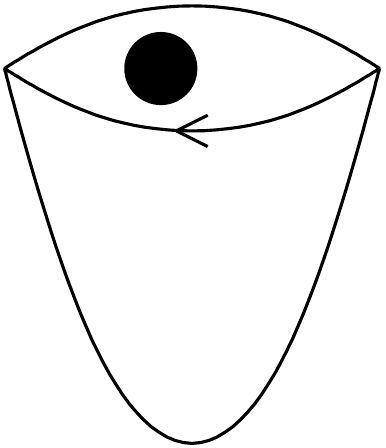}} -h \,\raisebox{-5pt}{\includegraphics[height=.25in]{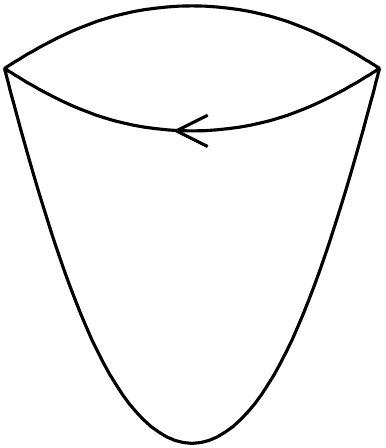}}, \raisebox{-5pt}{\includegraphics[height=.25in]{cupor.pdf}}\,)$ are mutually inverse isomorphisms. 
\[\beta \alpha =  \left ( \begin{array}{cc}\raisebox{-5pt}{\includegraphics[height=.25in]{cupordot.pdf}} -h \,\raisebox{-5pt}{\includegraphics[height=.25in]{cupor.pdf}}, & \raisebox{-5pt}{\includegraphics[height=.25in]{cupor.pdf}}\end{array} \right ) \circ \left ( \begin{array}{c}\raisebox{-5pt}{\includegraphics[height=.25in]{capor.pdf}} \vspace{.1cm} \\\raisebox{-5pt}{\includegraphics[height=.25in]{capordot.pdf}} \end{array} \right ) = \raisebox{-15pt}{\includegraphics[height=0.5in]{surgery1.pdf}} - h\raisebox{-15pt}{\includegraphics[height=0.5in]{surgery3}} + \raisebox{-15pt}{\includegraphics[height=0.5in]{surgery2.pdf}} \stackrel{(SF)}{=} \raisebox{-15pt}{\includegraphics[height=0.5in]{surgery.pdf}} = \id(\raisebox{-4pt}{\includegraphics[height=0.2in]{unknot-clockwise.pdf}})\]

\[ \alpha \beta = \left ( \begin{array}{c}\raisebox{-5pt}{\includegraphics[height=.25in]{capor.pdf}} \vspace{.1cm} \\\raisebox{-5pt}{\includegraphics[height=.25in]{capordot.pdf}} \end{array} \right ) \circ \left ( \begin{array} {cc} \raisebox{-5pt}{\includegraphics[height=.25in]{cupordot.pdf}} -h \,\raisebox{-5pt}{\includegraphics[height=.25in]{cupor.pdf}}, & \raisebox{-5pt}{\includegraphics[height=.25in]{cupor.pdf}} \end{array} \right ) = \left ( \begin{array}{cc} \raisebox{-5pt}{\includegraphics[width=0.25in]{sphd.pdf}} - h \,\raisebox{-5pt}{\includegraphics[width=0.25in]{sph.pdf}} & \raisebox{-5pt}{\includegraphics[width=0.25in]{sph.pdf}} \vspace{.15cm} \\ \raisebox{-5pt}{\includegraphics[width=0.25in]{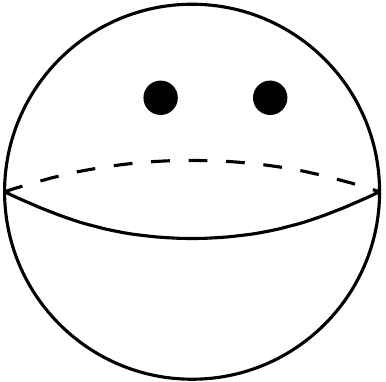}} - h\, \raisebox{-5pt}{\includegraphics[width=0.25in]{sphd.pdf}} &\raisebox{-5pt}{\includegraphics[width=0.25in]{sphd.pdf}}     \end{array} \right) \stackrel {(S), (2D)}{=} \left( \begin{array}{cc} 1 & 0 \\ 0 & 1 \end{array} \right ). \]

In the same manner, one can use relations (UFO) and (CN) to verify that $(\,\raisebox{-5pt}{\includegraphics[height=.25in]{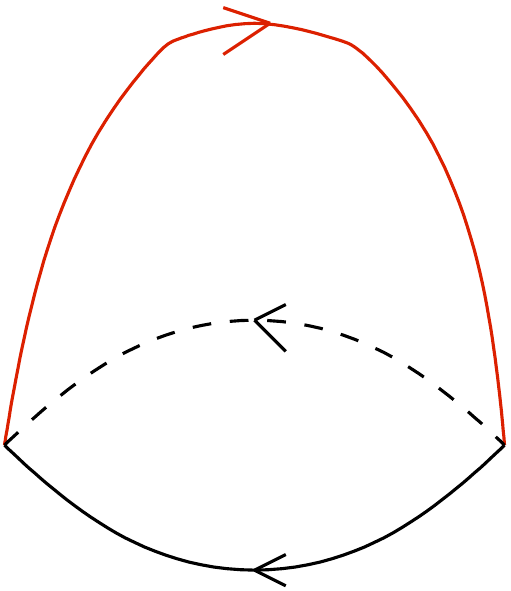}}, \raisebox{-5pt}{\includegraphics[height=.25in]{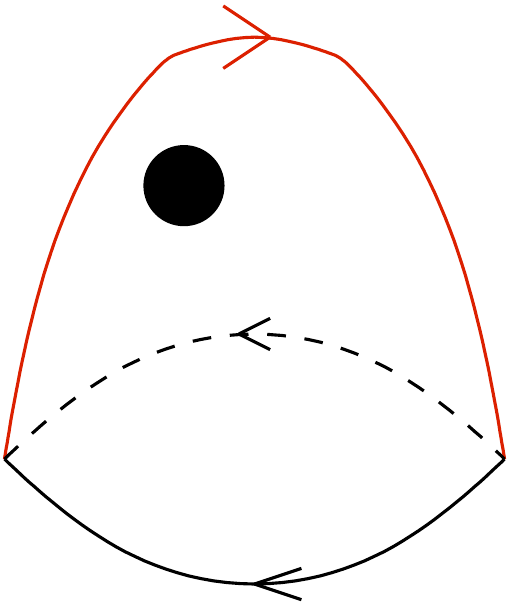}}\,)^t$ and $(\,-i\, \raisebox{-5pt}{\includegraphics[height=.25in]{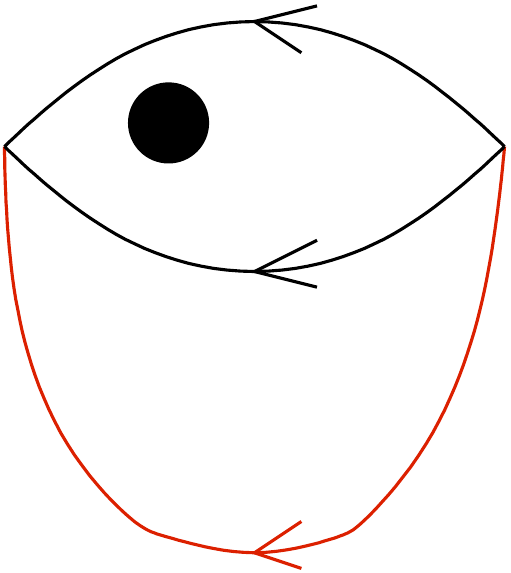}} +  hi\,\raisebox{-5pt}{\includegraphics[height=.25in]{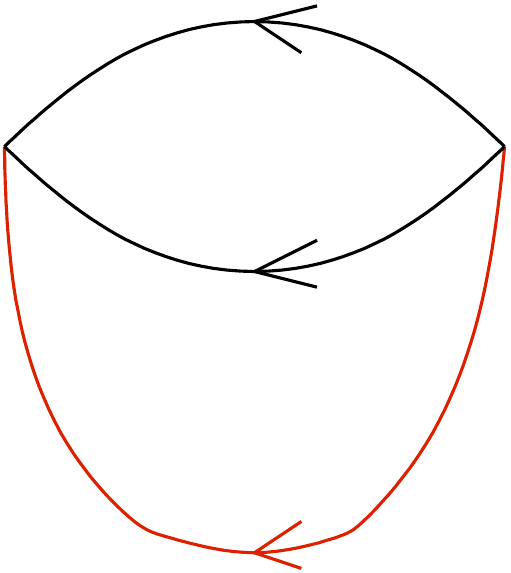}}, -i\, \raisebox{-5pt}{\includegraphics[height=.25in]{cupsa.pdf}}\,)$ are also mutually inverse isomorphisms.
\end{proof}

\begin{lemma} (Gaussian elimination for complexes) \label{lemma:Gaussian elimination}
If $\phi : b_1 \rightarrow b_2$ is an isomorphism in some additive category $\mathcal{C}$, then the complex segment in Mat($\mathcal{C}$) 
\[... \left [C \right ] \stackrel{\left (\begin{array}{c} \alpha \\ \beta \end{array}\right)}{\longrightarrow} \left [ \begin{array}{c} b_1 \\ D \end{array} \right ] \stackrel{ \left (\begin{array}{cc} \phi &\delta \\ \gamma &\epsilon \end{array} \right)}{\longrightarrow} \left [ \begin{array}{c} b_2 \\ E \end{array} \right ] \stackrel{\left( \begin{array}{cc} \mu & \nu \end{array}\right )}{\longrightarrow} \left [ F \right ]...\]
is isomorphic to the complex segment 
\[... \left [C \right ] \stackrel{\left (\begin{array}{c} 0 \\ \beta \end{array}\right)}{\longrightarrow} \left [ \begin{array}{c} b_1 \\ D \end{array} \right ] \stackrel{ \left (\begin{array}{cc} \phi & 0 \\ 0 &\epsilon - \gamma \phi ^{-1} \delta \end{array} \right)}{\longrightarrow} \left [ \begin{array}{c} b_2 \\ E \end{array} \right ] \stackrel{\left( \begin{array}{cc} 0 & \nu \end{array}\right )}{\longrightarrow} \left [ F \right ]...\]
The later is the direct sum of the contractible complex
$ 0 \longrightarrow b_1 \stackrel{\phi}{\longrightarrow} b_2 \longrightarrow 0 $
and the complex segment 
$... \left [C \right ] \stackrel{\left (\beta \right)}{\longrightarrow} \left [D \right] \stackrel{\left (\epsilon - \gamma \phi ^{-1} \delta \right)}{\longrightarrow} \left [E \right ] \stackrel{\left(\nu \right )}{\longrightarrow} \left [ F \right ]... \,$. Therefore, the first and last complex segments are homotopy equivalent.
\end{lemma}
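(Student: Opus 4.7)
The plan is to construct an explicit chain isomorphism between the two segments, implemented by elementary ``row/column operation'' matrices on $[b_1;D]$ and $[b_2;E]$, and then observe that the simplified complex splits off the contractible two-term piece $0 \to b_1 \xrightarrow{\phi} b_2 \to 0$ as a direct summand.

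First I would define the chain map: on $[b_1;D]$ take $A := \left(\begin{smallmatrix} 1 & \phi^{-1}\delta \\ 0 & 1 \end{smallmatrix}\right)$, on $[b_2;E]$ take $B := \left(\begin{smallmatrix} 1 & 0 \\ -\gamma\phi^{-1} & 1 \end{smallmatrix}\right)$, and take the identity on $C$ and $F$. Both $A$ and $B$ are unipotent block triangular matrices, hence automatically invertible in $\mathrm{Mat}(\mathcal{C})$.

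Second I would verify the three commuting squares. For the middle square, both sides of the required identity $B \cdot \left(\begin{smallmatrix} \phi & \delta \\ \gamma & \epsilon \end{smallmatrix}\right) = \left(\begin{smallmatrix} \phi & 0 \\ 0 & \epsilon - \gamma\phi^{-1}\delta \end{smallmatrix}\right) \cdot A$ expand to $\left(\begin{smallmatrix} \phi & \delta \\ 0 & \epsilon - \gamma\phi^{-1}\delta \end{smallmatrix}\right)$ by direct $2{\times}2$ multiplication. For the outer squares, I would use $d^2 = 0$ in the original complex: expanding $\left(\begin{smallmatrix} \phi & \delta \\ \gamma & \epsilon \end{smallmatrix}\right)\left(\begin{smallmatrix}\alpha \\ \beta \end{smallmatrix}\right) = 0$ extracts $\phi\alpha + \delta\beta = 0$, which forces $A\left(\begin{smallmatrix}\alpha \\ \beta \end{smallmatrix}\right) = \left(\begin{smallmatrix} 0 \\ \beta \end{smallmatrix}\right)$, matching the new left-hand differential; symmetrically, $(\mu,\nu)\left(\begin{smallmatrix} \phi & \delta \\ \gamma & \epsilon \end{smallmatrix}\right) = 0$ gives $\mu = -\nu\gamma\phi^{-1}$, hence $(0,\nu)B = (\mu,\nu)$ on the right.

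Finally, the target complex is manifestly block-diagonal at the center and has zero entries coming out of the $b_1, b_2$ summands at the ends, so it is literally a direct sum in $\mathrm{Kom}(\mathrm{Mat}(\mathcal{C}))$ of the two-term piece $0 \to b_1 \xrightarrow{\phi} b_2 \to 0$, which is null-homotopic via the homotopy $\phi^{-1}$, and the shorter complex $\dots \to [C] \xrightarrow{\beta} [D] \xrightarrow{\epsilon - \gamma\phi^{-1}\delta} [E] \xrightarrow{\nu} [F] \to \dots$\,; that the latter really is a complex again follows from $d^2 = 0$, e.g.\ $(\epsilon - \gamma\phi^{-1}\delta)\beta = \epsilon\beta + \gamma\alpha = 0$ after substituting $\delta\beta = -\phi\alpha$. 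Dropping the contractible summand then yields the stated homotopy equivalence. The only real obstacle is bookkeeping of the left-versus-right composition conventions; once those are fixed the argument reduces to $2{\times}2$ matrix algebra together with the hypothesis that $\phi^{-1}$ exists.
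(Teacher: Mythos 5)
Your proof is correct, and it is essentially the standard argument: conjugate by the unipotent block-triangular matrices $A$ and $B$, use $d^2=0$ to kill the off-diagonal entries at the ends, and split off the contractible two-term summand. Note, however, that the paper itself gives no proof of this lemma — it states that the result ``is exactly \cite[Lemma 4.2]{BN2} therefore we omit its proof'' — so the only available comparison is with Bar-Natan's proof, which proceeds along the same lines as yours; your write-up fills in the omitted argument correctly, including the sanity checks that the reduced sequence is still a complex.
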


Whenever an object in some formal complex $\Lambda \in \textit{Kof}_{/h}$ contains an oriented loop or a basic closed web with two vertices, we remove it using Lemma~\ref{lemma:delooping}, where $\textit{Kof}_{/h}$ is the homotopy subcategory of \textit{Kof}. Then we use Lemma~\ref{lemma:Gaussian elimination} to cancel all isomorphisms in the resulting complex.

\subsection{Examples}\label{sec:examples}

Let us first see how one can use the tools described in Section~\ref{sec:fast computations} to show the homotopy invariance of the complex $[T]$ under Reidemeister moves. For this, one has to compute and simplify the complexes corresponding to each side of a certain Reidemeister move, to obtain the same result for both sides.

\subsection*{\textbf{Reidemeister I}} Consider diagrams $D=\raisebox{-13pt}{\includegraphics[height=0.4in]{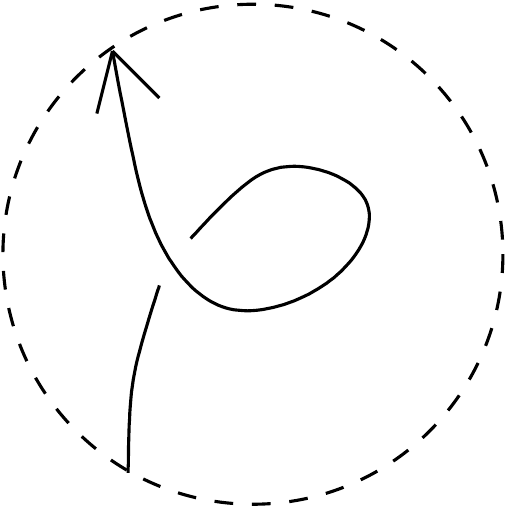}}\,\,\text{and}\,
D'=\raisebox{-13pt}{\includegraphics[height=0.4in]{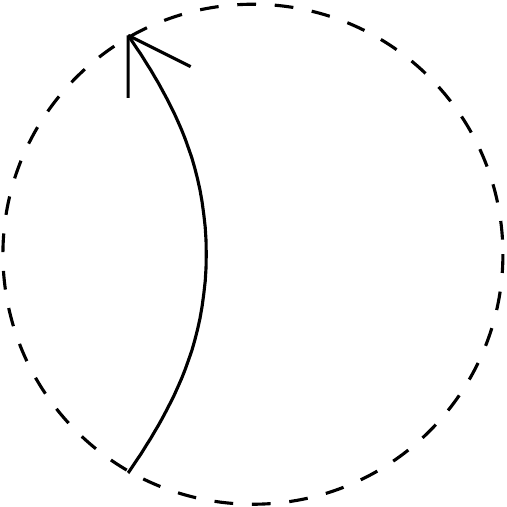}}.$
The complex $ [D]: \,\, 0 \longrightarrow \underline{\left [\raisebox{-8pt}{\includegraphics[height=0.3in]{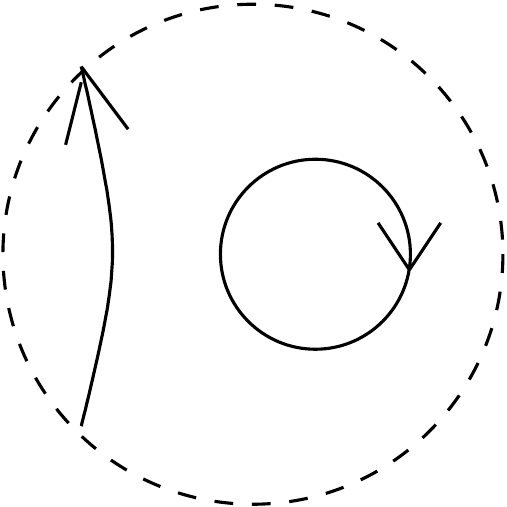}}\right ]\{-1\}}\stackrel{\raisebox{-8pt}{\includegraphics[height=0.3in]{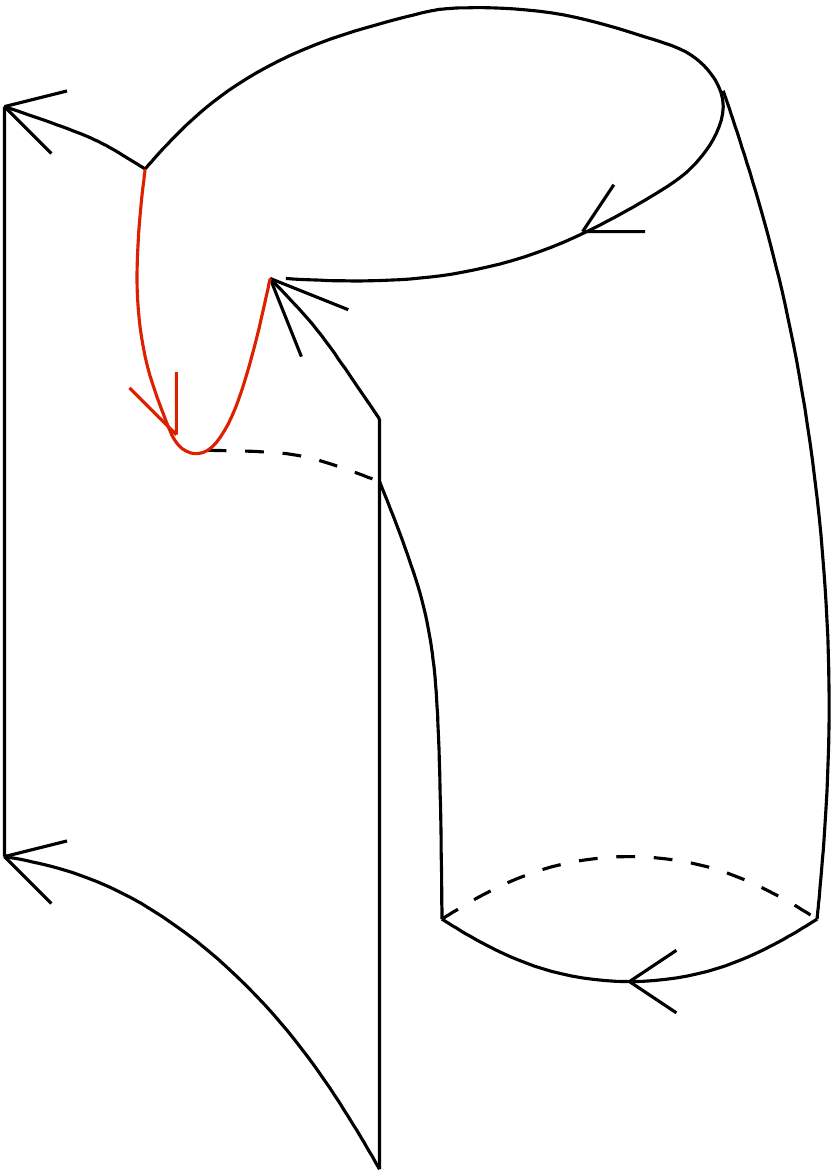}}}{\longrightarrow}\left [ \raisebox{-8pt} {\includegraphics[height=0.3in]{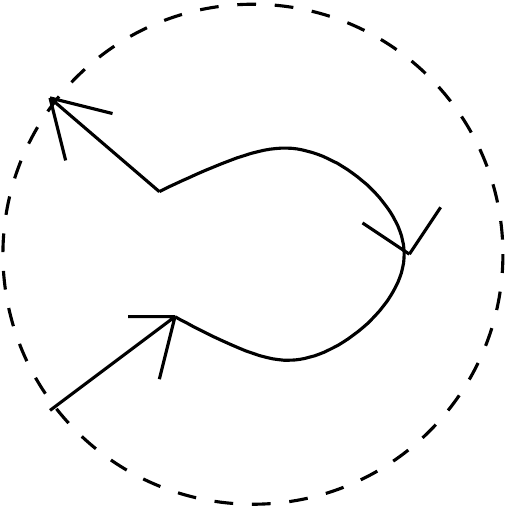}}\right ]\{-2\} \longrightarrow 0$
is isomorphic  in $\textit{Kof}_{/h}$ to   
$$ 0 \longrightarrow \underline{\left [\begin{array}{c} \raisebox{-8pt}{\includegraphics[height=0.3in]{reid1-1.pdf}} \{-2\}\vspace{.1cm} \\  \raisebox{-8pt}{\includegraphics[height=0.3in]{reid1-1.pdf}}\{0\} \end{array} \right] }\stackrel{\left ( \begin{array}{cc}\raisebox{-8pt}{\includegraphics[height=0.3in]{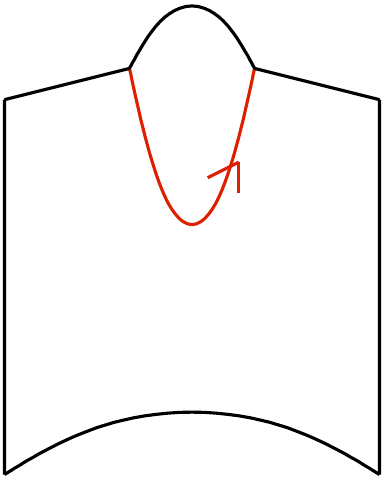}} & \raisebox{-8pt}{\includegraphics[height=0.3in]{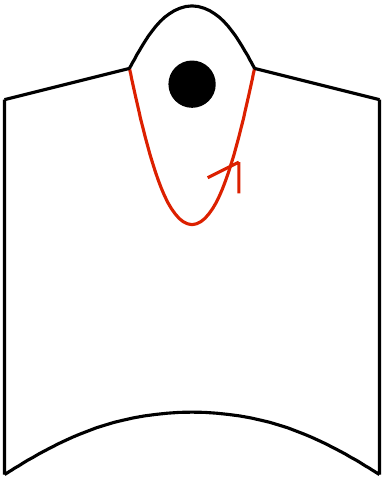}} \end{array} \right )}{\longrightarrow}\left [ \raisebox{-8pt} {\includegraphics[height=0.3in]{reid1-3.pdf}}\right ]\{-2\} \longrightarrow 0.$$

Note that the underlined objects above are at the cohomological degree $0.$ The later complex decomposes into contractible complex (its differential is an isomorphism)
$$ 0 \longrightarrow \underline{\left [ \raisebox{-8pt}{\includegraphics[height=0.3in]{reid1-1.pdf}}\right ] \{-2\}}\stackrel{\left ( \raisebox{-8pt}{\includegraphics[height=0.3in]{isom-8.pdf}}\right )}{\longrightarrow} \left [\raisebox{-8pt} {\includegraphics[height=0.3in]{reid1-3.pdf}}\right ]\{-2\} \longrightarrow 0 \quad \text{and}$$
$$0 \longrightarrow \underline{\,\left [\raisebox{-8pt}{\includegraphics[height=0.3in]{reid1-1.pdf}} \,\right ]}\longrightarrow 0.$$
Hence, complexes $[\,\raisebox{-8pt}{\includegraphics[height=0.3in]{lkink.pdf}}\,]$ and $[\,\raisebox{-8pt}{\includegraphics[height=0.3in]{reid1-1.pdf}}\,]$ are homotopy equivalent.

\subsection*{\textbf{Reidemeister II}}
Consider diagrams $D=\raisebox{-13pt}{\includegraphics[height=0.4in]{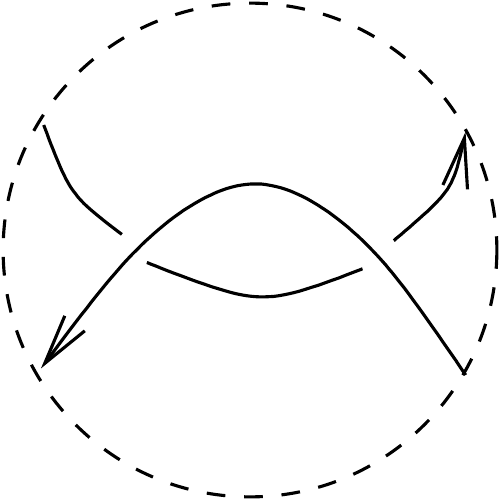}}$ and 
$D'=\raisebox{-13pt}{\includegraphics[height=0.4in]{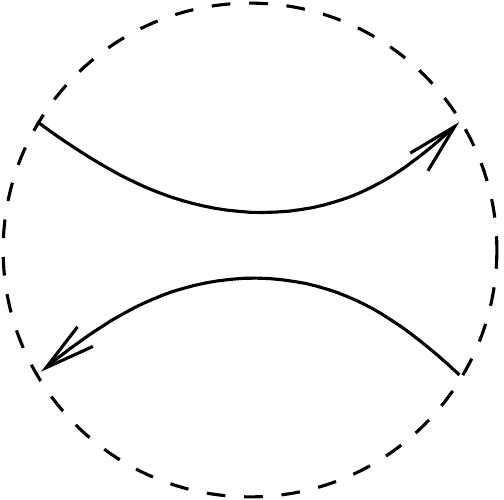}}.$ The formal complex $[D]$ is the double complex given below, which is the tensor product of the formal complexes associated with the two crossings in $D.$

$$D = \raisebox{-8pt}{\includegraphics[height=0.3in]{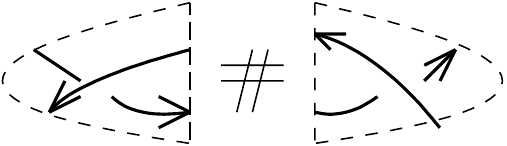}} \qquad \raisebox{-85pt}{\includegraphics[height=2.5in]{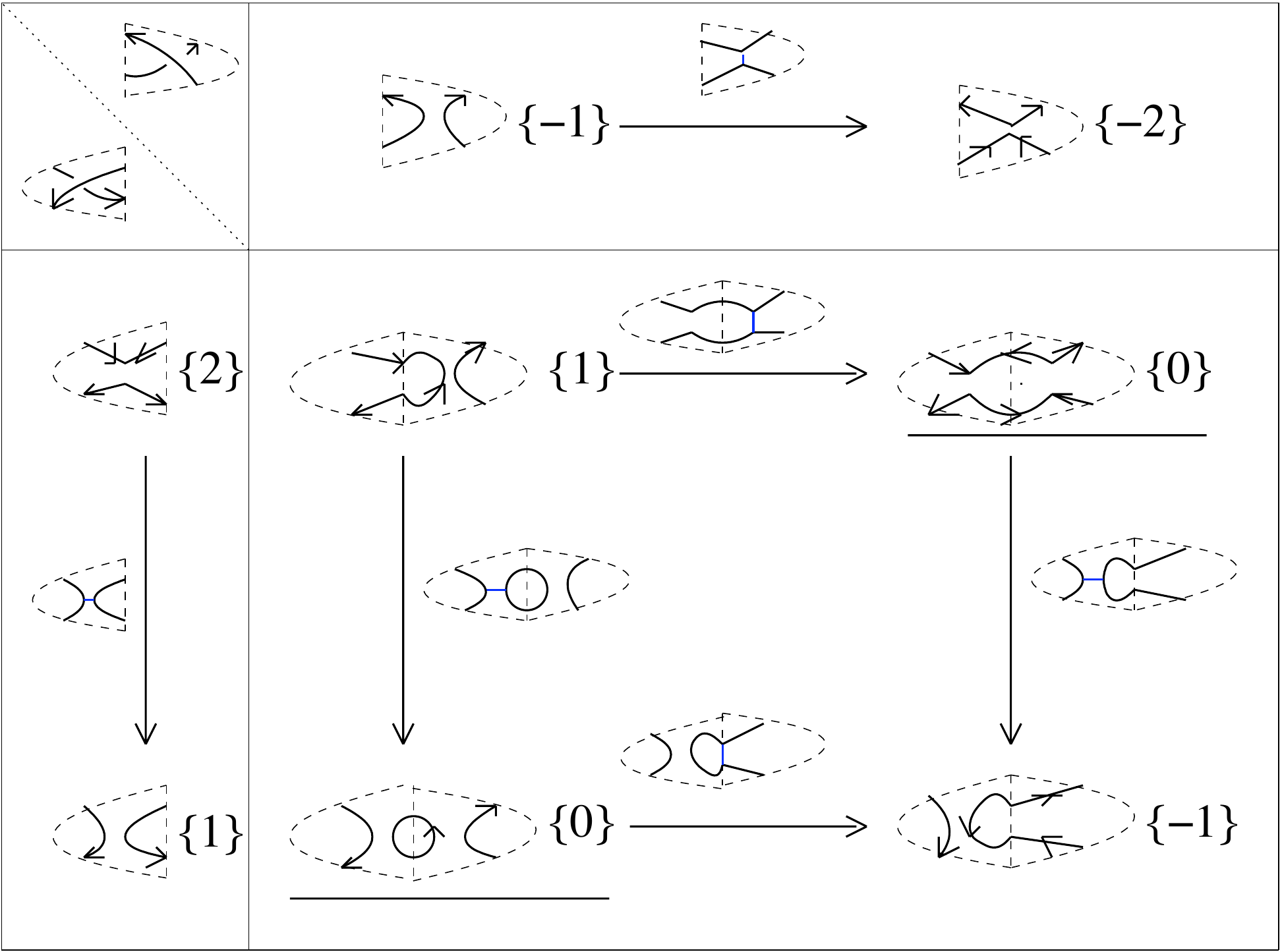}}$$

$$ \includegraphics[height=1.1in]{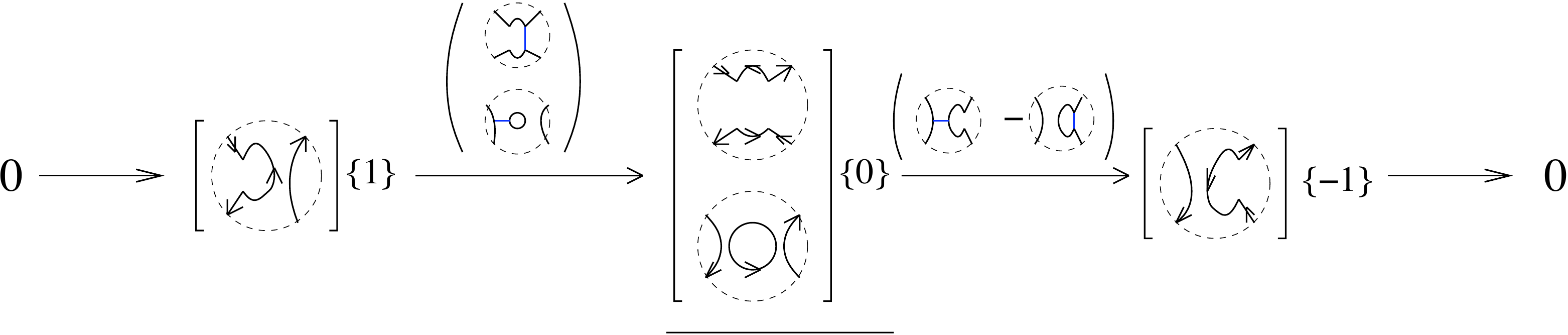}$$

The morphism \raisebox{-8pt}{\includegraphics[height=0.3in]{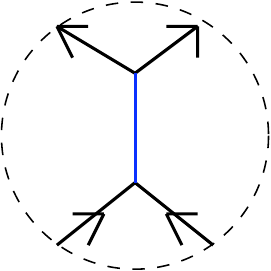}} is the `singular saddle' with domain \raisebox{-8pt}{\includegraphics[height=0.3in]{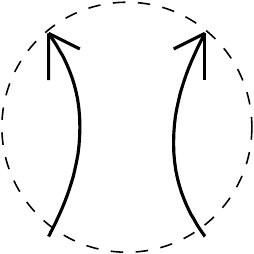}} and range \raisebox{-8pt}{\includegraphics[height=0.3in]{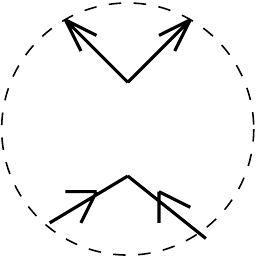}}, while the morphism \raisebox{-8pt}{\includegraphics[height=0.3in]{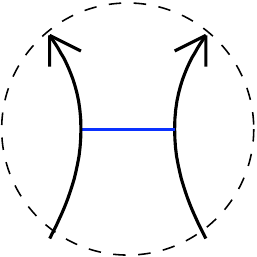}} is the `singular saddle' with domain \raisebox{-8pt}{\includegraphics[height=0.3in]{piecewiseor-diag.pdf}} and range \raisebox{-8pt}{\includegraphics[height=0.3in]{or-diag.pdf}}.
There is a loop in the previous complex, thus applying Lemma~\ref{lemma:delooping}, $[D]$ is isomorphic in $\textit{Kof}_{/h}$ to: 
$$\raisebox{-18pt}{ \includegraphics[height=1.3in]{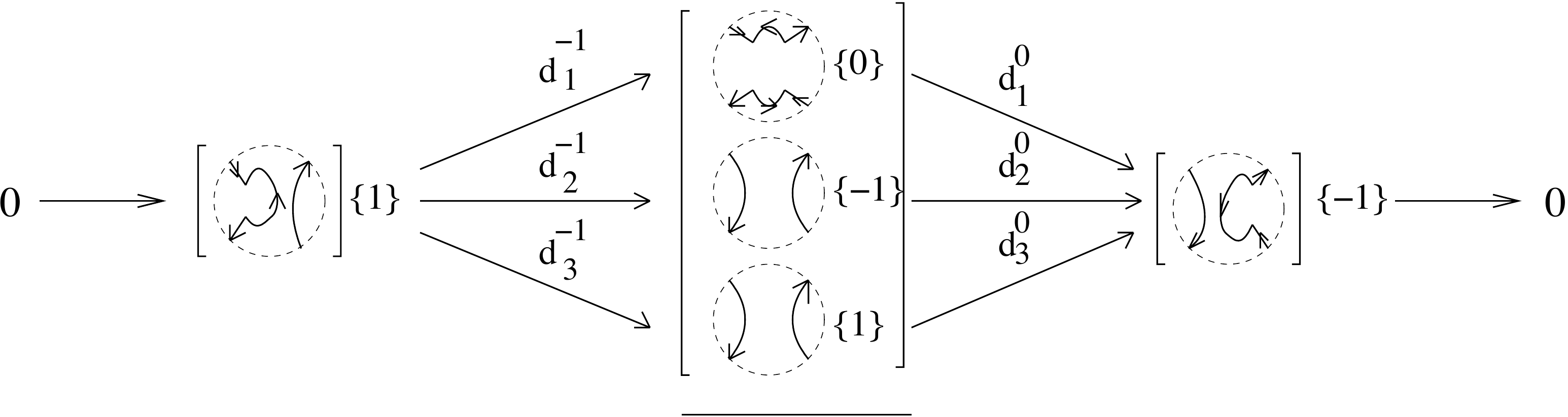}}$$
The above complex is the direct sum of 
$$ \raisebox{-13pt}{\includegraphics[height=.45in]{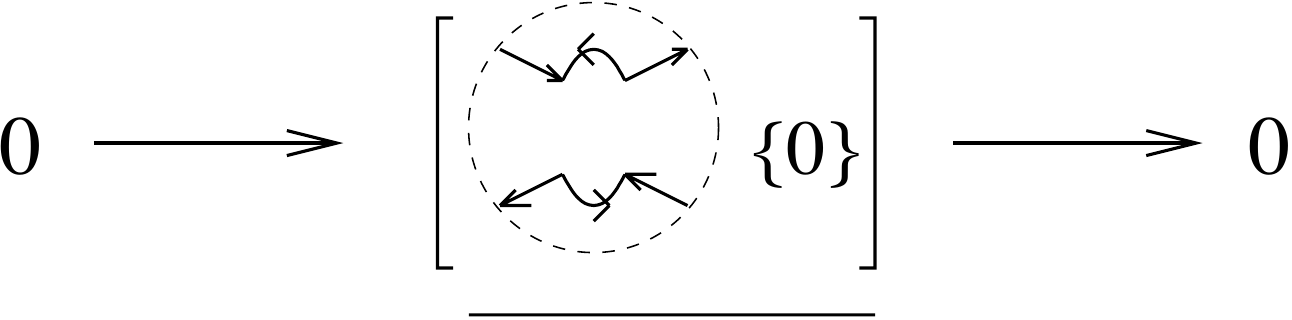}},$$
$$ \raisebox{-13pt}{\includegraphics[height=.53in]{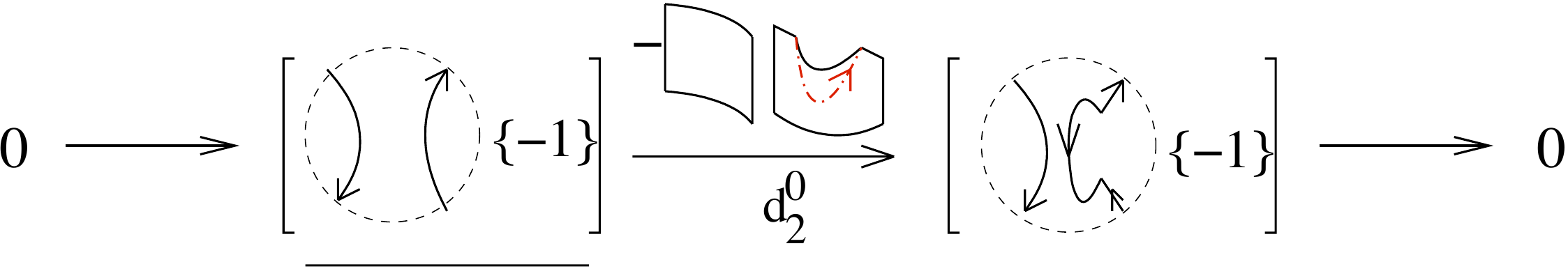}} \,\, \, \, \text{and}$$
$$ \raisebox{-13pt}{\includegraphics[height=.53in]{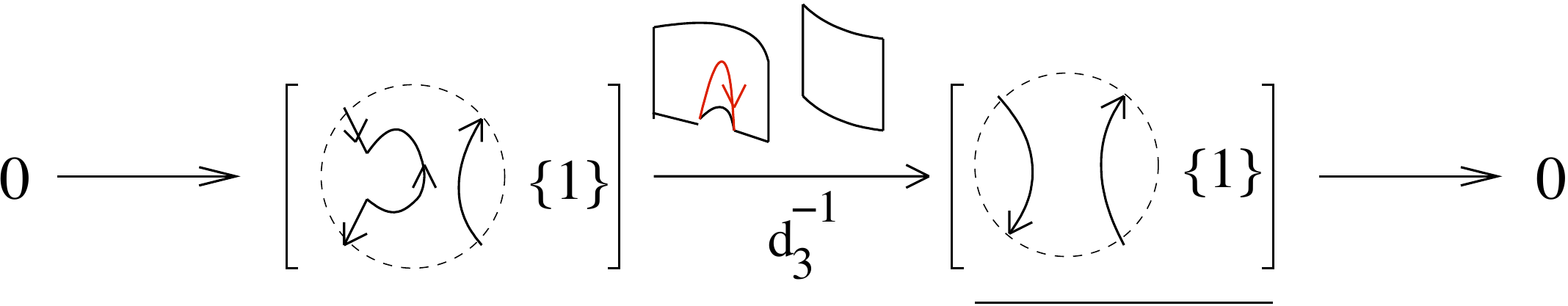}}.$$

The last two complexes are contractible, since there differentials are isomorphisms in $\textit{Foams}_{/\ell},$ and the first complex is isomorphic to $[D']$ (see Lemma~\ref{lemma:isomorphisms}). Removing contractible direct summands we obtain that $[D]$ and $[D']$ are homotopy equivalent.

The other Reidemeister moves have a similar approach.

\subsection*{\textbf{The figure eight knot}}

The figure eight knot diagram in Figure~\ref{figure eight knot} is the connected sum of the two tangle diagrams $T_1 = \raisebox{-8pt}{\includegraphics[height=.35in]{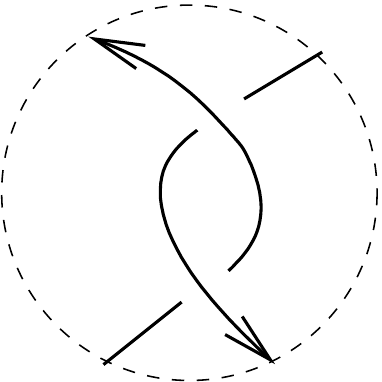}}$ and $T_2 = \raisebox{-8pt}{\includegraphics[height=.35in]{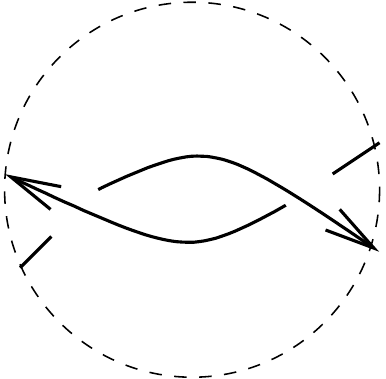}}.$
\begin{figure}[ht]
\centerline{\includegraphics[height=.8in]{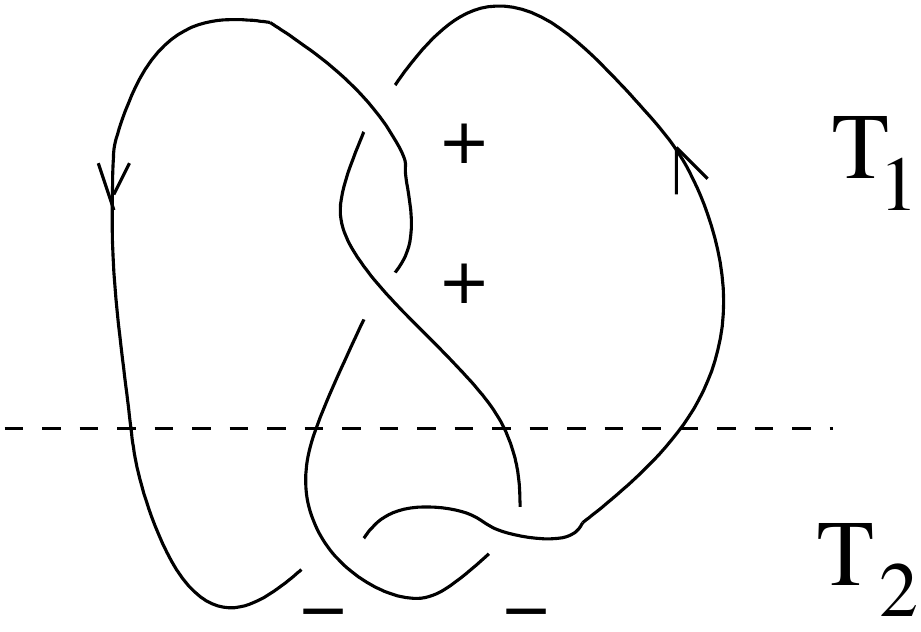}}
\caption{The figure eight knot cut in half}
\label{figure eight knot}
\end{figure}
$$[T_1]:  \left [\raisebox{-8pt}{ \includegraphics[height=.35in]{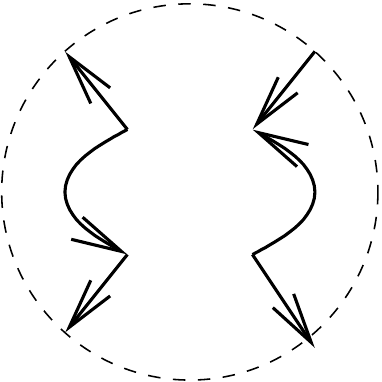}} \right] \{4\} \stackrel{\left (\begin{array}{c} \raisebox{-5pt}{\includegraphics[height=.25in]{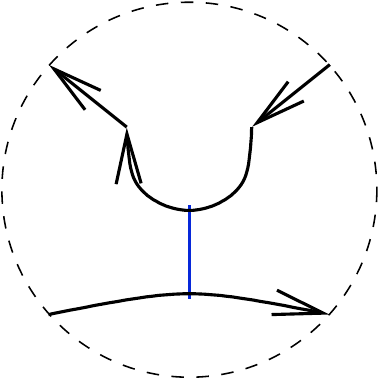}} \vspace{.05in} \\ \raisebox{-5pt}{\includegraphics[height=.25in]{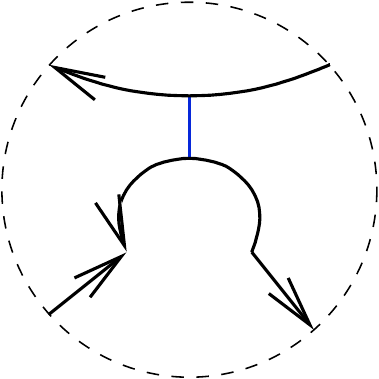}} \end{array} \right )} {\longrightarrow} \left [ \begin{array}{c} \raisebox{-8pt}{\includegraphics[height=.35in]{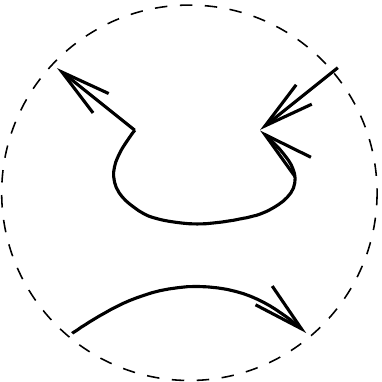}}  \vspace{.05in} \\ \raisebox{-8pt}{\includegraphics[height=.35in]{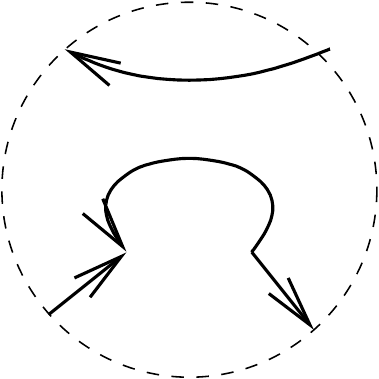}} \end{array} \right ] \{3\} \stackrel{\left ( \begin{array}{cc} \raisebox{-5pt}{\includegraphics[height=.25in]{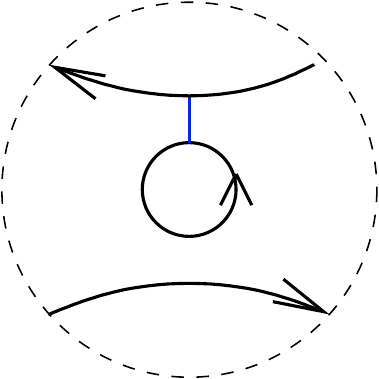}} & -\raisebox{-5pt}{\includegraphics[height=.25in]{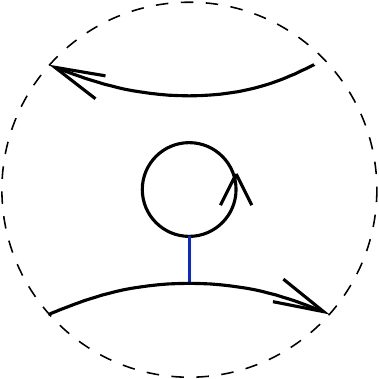}} \end{array}\right)}{\longrightarrow} \underline{\left [ \raisebox{-8pt}{\includegraphics[height=.35in]{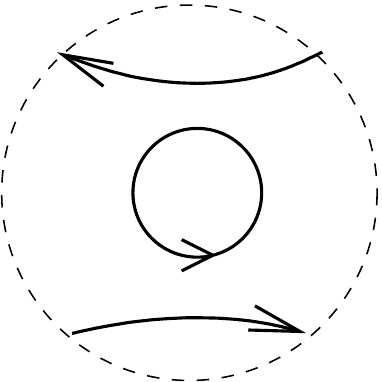}}\right ]\{2\}} 
$$

The object at height $0$ in $[T_1]$ contains a loop. Applying Lemma~\ref{lemma:delooping} and composing the morphisms in the second differential above with those of Figure~\ref{delooping} we get the next complex, which is isomorphic to $[T_1]$:

$$\left [\raisebox{-8pt}{ \includegraphics[height=.35in]{T1-2.pdf}} \right] \{4\} \stackrel{\left (\begin{array}{c} \raisebox{-5pt}{\includegraphics[height=.25in]{T1-d-2-up.pdf}}  \vspace{.05in}\\ \raisebox{-5pt}{\includegraphics[height=.25in]{T1-d-2-down.pdf}} \end{array} \right )} {\longrightarrow} \left [ \begin{array}{c} \raisebox{-8pt}{\includegraphics[height=.35in]{T1-1-up.pdf}}  \vspace{.05in}\\ \raisebox{-8pt}{\includegraphics[height=.35in]{T1-1-down.pdf}} \end{array} \right ] \{3\} \stackrel{\left ( \begin{array}{cc} \raisebox{-5pt}{\includegraphics[height=.25in]{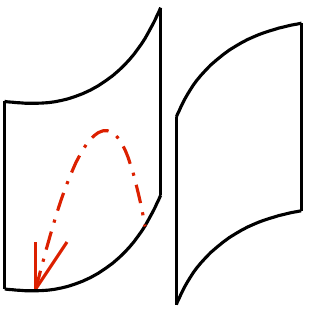}} & -\raisebox{-5pt}{\includegraphics[height=.25in]{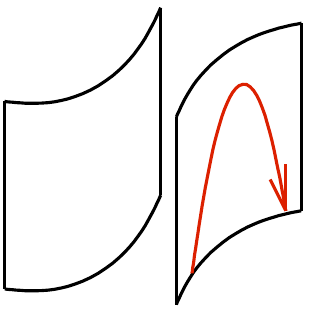}} \vspace{.05in} \\ \raisebox{-5pt}{\includegraphics[height=.25in]{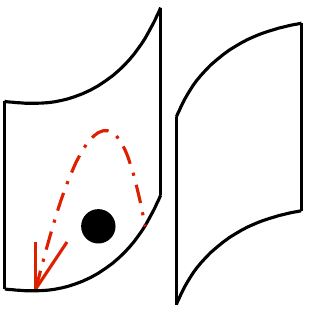}} & -\raisebox{-5pt}{\includegraphics[height=.25in]{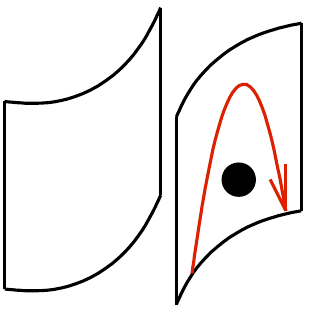}}\end{array}\right)}{\longrightarrow} \underline{\left [ \begin{array}{c}\raisebox{-8pt}{\includegraphics[height=.35in]{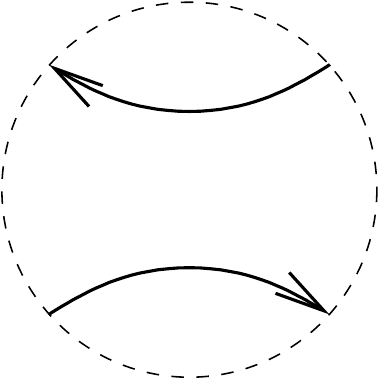}}\{3\}  \vspace{.05in}\\ \raisebox{-8pt}{\includegraphics[height=.35in]{left-right.pdf}}\{1\} \end{array} \right ] }
$$

For the simplicity of drawings, we next apply the isomorphisms~\ref{isomorphisms} to remove pairs of vertices. After this operation, the previous complex is isomorphic to:

$$ \left [\raisebox{-8pt}{ \includegraphics[height=.3in]{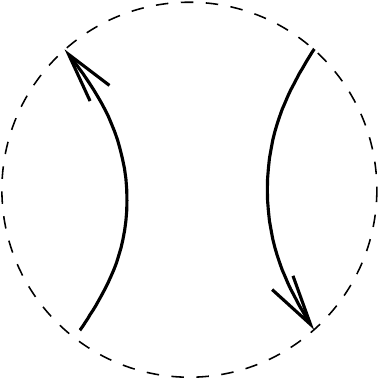}} \right] \{4\} \stackrel{\left (\begin{array}{c} -i \raisebox{-5pt}{\includegraphics[height=.25in]{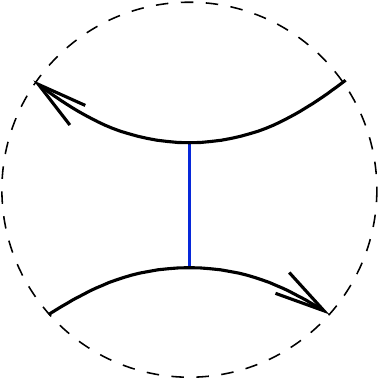}}  \vspace{.05in}\\ -i \raisebox{-5pt}{\includegraphics[height=.25in]{saddle-ud-lr.pdf}} \end{array} \right )} {\longrightarrow} \left [ \begin{array}{c} \raisebox{-8pt}{\includegraphics[height=.3in]{left-right.pdf}}  \vspace{.05in}\\ \raisebox{-8pt}{\includegraphics[height=.3in]{left-right.pdf}} \end{array} \right ] \{3\} \stackrel{\left ( \begin{array}{cc} \,\,\,\,\raisebox{-5pt}{\includegraphics[height=.25in]{left-right.pdf}} & -\raisebox{-5pt}{\includegraphics[height=.25in]{left-right.pdf}}  \vspace{.05in}\\ -\raisebox{-5pt}{\includegraphics[height=.25in]{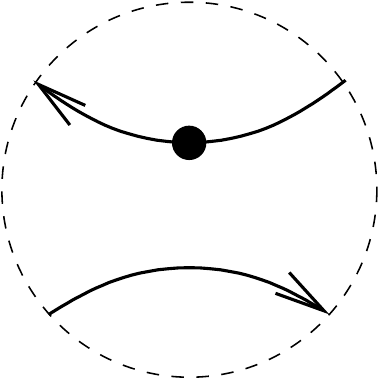}} +h \,\raisebox{-5pt}{\includegraphics[height=.25in]{left-right.pdf}} & \raisebox{-5pt}{\includegraphics[height=.25in]{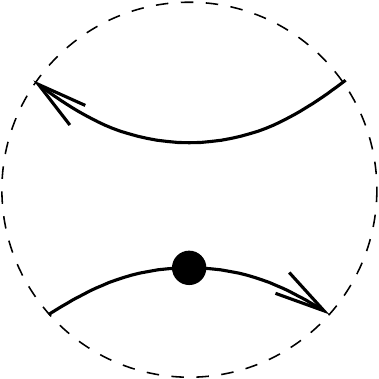}} - h\raisebox{-5pt}{\includegraphics[height=.25in]{left-right.pdf}}\end{array}\right)}{\longrightarrow} \underline{\left [ \begin{array}{c}\raisebox{-8pt}{\includegraphics[height=.3in]{left-right.pdf}}\{3\}  \vspace{.05in}\\ \raisebox{-8pt}{\includegraphics[height=.3in]{left-right.pdf}}\{1\} \end{array} \right ]}$$

When appearing as a cobordism, the symbol \raisebox{-5pt}{\includegraphics[height=.25in]{left-right.pdf}} denotes the identity automorphism of the resolution with the same symbol, that is, it is the union of two `curtains'. Similarly, \raisebox{-5pt}{\includegraphics[height=.25in]{left-right-dup.pdf}} and \raisebox{-5pt}{\includegraphics[height=.25in]{left-right-ddown.pdf}} denote the same cobordism with an extra dot on one of the curtains. Moreover, \raisebox{-5pt}{\includegraphics[height=.25in]{saddle-ud-lr.pdf}} denotes the saddle with domain \raisebox{-5pt}{\includegraphics[height=.25in]{up-down.pdf}} and range \raisebox{-5pt}{\includegraphics[height=.25in]{left-right.pdf}}.

The upper left entry in the second nontrivial differential of the previous complex is an isomorphism, and applying the first part of Lemma~\ref{lemma:Gaussian elimination} we arrive at the complex below:
$$  \left [\raisebox{-8pt}{ \includegraphics[height=.35in]{up-down.pdf}} \right] \{4\} \stackrel{\left (\begin{array}{c} 0  \vspace{.05in}\\ -i \raisebox{-5pt}{\includegraphics[height=.25in]{saddle-ud-lr.pdf}} \end{array} \right )} {\longrightarrow} \left [ \begin{array}{c} \raisebox{-8pt}{\includegraphics[height=.35in]{left-right.pdf}}  \vspace{.05in}\\ \raisebox{-8pt}{\includegraphics[height=.35in]{left-right.pdf}} \end{array} \right ] \{3\} \stackrel{\left ( \begin{array}{cc} \raisebox{-5pt}{\includegraphics[height=.25in]{left-right.pdf}} & 0  \vspace{.05in}\\ 0 & \raisebox{-5pt}{\includegraphics[height=.25in]{left-right-ddown.pdf}} - \raisebox{-5pt}{\includegraphics[height=.25in]{left-right-dup.pdf}}\end{array}\right)}{\longrightarrow} \underline{\left [ \begin{array}{c}\raisebox{-8pt}{\includegraphics[height=.35in]{left-right.pdf}}\{3\}  \vspace{.05in}\\ \raisebox{-8pt}{\includegraphics[height=.35in]{left-right.pdf}}\{1\} \end{array} \right ]}$$

Removing the contractible summand
$$ 0 \longrightarrow \left [ \raisebox{-8pt}{\includegraphics[height=.35in]{left-right.pdf}}\right ]\{3\} \stackrel{ \left (\raisebox{-5pt}{\includegraphics[height=.25in]{left-right.pdf}} \right)} {\longrightarrow} \underline{\left [\raisebox{-8pt}{\includegraphics[height=.35in]{left-right.pdf}} \right ]\{3\}}\longrightarrow 0,$$
we obtain the complex $\mathcal{C}_1$ given below, which is isomorphic to $[T_1]$ in $\textit{Kof}_{/h}.$
$$\mathcal{C}_1: \quad   \left [\raisebox{-8pt}{ \includegraphics[height=.35in]{up-down.pdf}} \right] \{4\} \stackrel{ \left (-i \,\raisebox{-5pt}{\includegraphics[height=.25in]{saddle-ud-lr.pdf}} \right)} {\longrightarrow} \left [ \raisebox{-8pt}{\includegraphics[height=.35in]{left-right.pdf}}\right ]\{3\} \stackrel{ \left ( \,\raisebox{-5pt}{\includegraphics[height=.25in]{left-right-ddown.pdf}} - \,\raisebox{-5pt}{\includegraphics[height=.25in]{left-right-dup.pdf}}\right)} {\longrightarrow} \underline{\left[ \raisebox{-8pt}{\includegraphics[height=.35in]{left-right.pdf}}\right ]\{1\}}$$
The complex $[T_2]$ associated to the other half of the figure eight knot diagram is computed and simplified similarly. It turns out that it is isomorphic in $\textit{Kof}_{/h}$ to the complex $\mathcal{C}_2$ depicted below.
$$\mathcal{C}_2: \quad  \underline{\left [\raisebox{-8pt}{ \includegraphics[height=.35in]{up-down.pdf}} \right] \{-1\}} \stackrel{ \left (i \,\raisebox{-5pt}{\includegraphics[height=.25in]{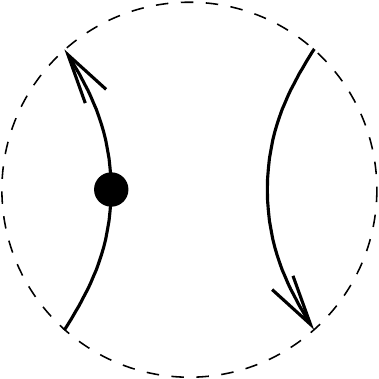}} -i \,\raisebox{-5pt}{\includegraphics[height=.25in]{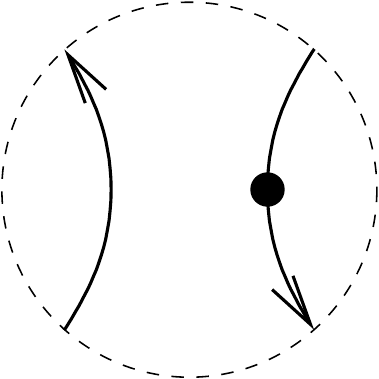}} \right)} {\longrightarrow} \left [ \raisebox{-8pt}{\includegraphics[height=.35in]{up-down.pdf}}\right ]\{-3\} \stackrel{ \left ( -\,\raisebox{-5pt}{\includegraphics[height=.25in]{saddle-ud-lr.pdf}} \right)} {\longrightarrow} \left[ \raisebox{-8pt}{\includegraphics[height=.35in]{left-right.pdf}}\right ]\{-4\}$$
Next step is to take the `formal tensor product' of $\mathcal{C}_1$ with $\mathcal{C}_2$ using the same side-by-side composition that one has to use to get from $T_1$ and $T_2$ the figure eight knot diagram. As a result, we obtain the double complex $\mathcal{C}$ shown below, in which we smoothed out the resolutions and cobordisms; we also canceled the four morphisms obtained on the upper right of the diagram, as they are differences of the same cobordism.
$$ \includegraphics[height=3.5in]{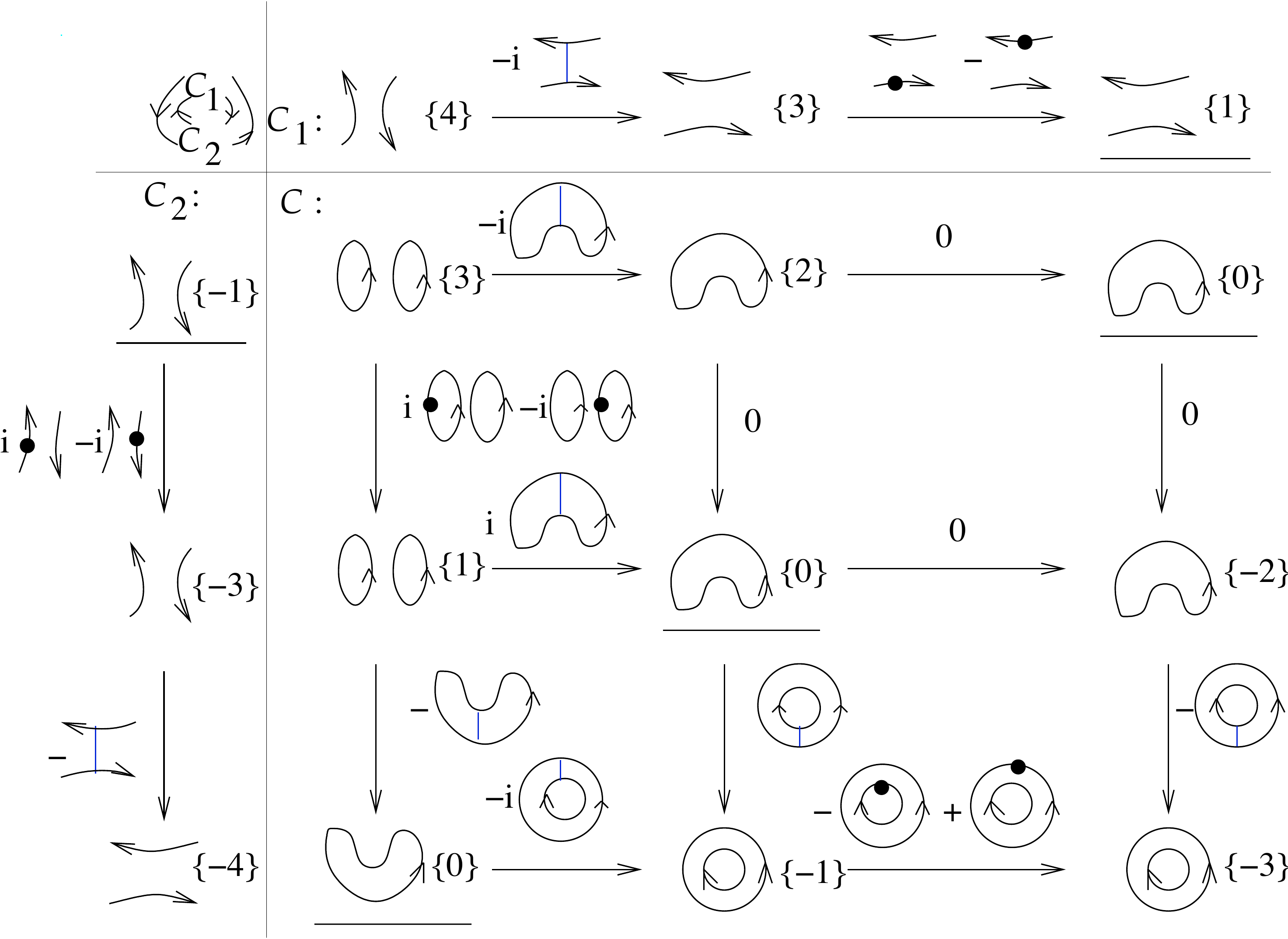}$$

The following step is to replace every loop with a pair of degree-shifted empty sets as in Lemma~\ref{lemma:delooping}, and to replace the differentials with their compositions with the isomorphisms of Figure~\ref{delooping}. As every object of $\mathcal{C}$ contains only loops, we arrive at the complex $\Lambda_1$ depicted below, in which all objects are degree-shifted empty sets and all morphisms are matrices of scalar multiples of the empty cobordism---recall that we are working modulo the local relations $\ell$ and all closed foams reduce to an element of the ground ring $\mathbb{Z}[i, a, h].$

Let us recall from~\cite{CC2} the structure maps for the Frobenius algebra defined on $\mathcal{A} = \mathbb{Z}[i, a, h]/(X^2 -h X -a) = \brak{1, X}_{\mathbb{Z}[i, a, h]}.$ The unit map $\iota \co \mathbb{Z}[i][a, h] \rightarrow \mathcal{A}$ and counit map $ \epsilon \co \mathcal{A} \rightarrow \mathbb{Z}[i][a, h]$ are given by $\iota(1) = 1$ and $ \epsilon(1) = 0, \, \epsilon(X) = 1,$ respectively.
Multiplication $m \co \mathcal{A} \otimes \mathcal{A} \rightarrow \mathcal{A}$ and comultiplication $\Delta \co \mathcal{A} \rightarrow \mathcal{A} \otimes \mathcal{A}$ are defined by
$$ \begin{cases}
 m(1 \otimes X) =X, & m(X \otimes 1) = X\\ 
m(1 \otimes 1) =1, & m(X \otimes X) =hX+ a
 \end{cases},\quad
 \begin{cases}
 \Delta(1) = 1 \otimes X + X \otimes 1-h1\otimes 1\\ 
 \Delta(X) = X \otimes X + a 1 \otimes 1.\end{cases} $$
We  will use the basis $(1,X)$ of the algebra $\mathcal{A}$ and write the maps $m$ and $\Delta$ relative to this basis. The cobordism \raisebox{-5pt}{\includegraphics[height=.25in]{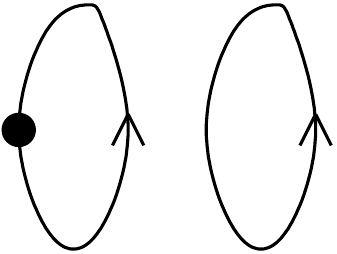}} is the multiplication by $X$ endomorphism of $\mathcal{A}$  on the first component of $\mathcal{A} \otimes \mathcal{A}$ and the identity map on the second component. Likewise, \raisebox{-5pt}{\includegraphics[height=.25in]{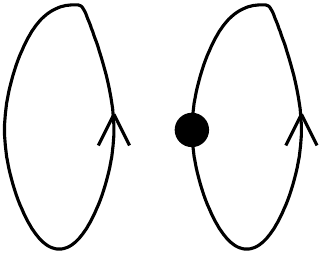}} is the identity on the first component and multiplication by $X$ endomorphism of $\mathcal{A}$ on the second component of the tensor product. Therefore, these cobordisms are defined by the following rules:
\begin{align} \raisebox{-5pt}{\includegraphics[height=.25in]{two-circles-leftd.pdf}} &= \left \{ \begin {array}{ccccc} 1 \otimes 1 & \rightarrow &X \otimes 1 \\
1 \otimes X &\rightarrow & X \otimes X \\
X \otimes 1 & \rightarrow & X ^2\otimes 1 &= &h (X\otimes 1) + a (1\otimes 1)\\
X \otimes X & \rightarrow & X ^2\otimes X &= & h(X \otimes X) + a (1 \otimes X) \end{array} \right. \\ 
\raisebox{-5pt}{\includegraphics[height=.25in]{two-circles-rightd}} &= \left \{ \begin {array}{ccccc} 1 \otimes 1 &\rightarrow &1 \otimes X \\
1 \otimes X &\rightarrow & 1 \otimes X^2 & =&h (1 \otimes X)+ a (1\otimes 1) \\
X \otimes 1 &\rightarrow & X \otimes X \\
X \otimes X &\rightarrow& X \otimes X^2 &=&h (X\otimes X) + a (X \otimes 1) \end{array} \right.
\end{align}  
The matrix of the cobordism $i \raisebox{-5pt}{\includegraphics[height=.25in]{two-circles-leftd.pdf}} -i \raisebox{-5pt}{\includegraphics[height=.25in]{two-circles-rightd.pdf}}$ relative to the basis $(1\otimes 1, 1\otimes X, X \otimes 1, X \otimes X )$ of the tensor product $\mathcal{A} \otimes \mathcal{A}$ is:
$$ i \, \raisebox{-5pt}{\includegraphics[height=.3in]{two-circles-leftd.pdf}} -i \,\raisebox{-5pt}{\includegraphics[height=.25in]{two-circles-rightd.pdf}} = \ \  \left ( \begin{array}{cccc} 0 & -ai & ai & 0 \\ -i & -hi & 0 & ai \\ i & 0 & hi & -ai \\ 0 & i & -i & 0 \end{array} \right ).$$
Likewise we have
$ \,\, \raisebox{-5pt}{\includegraphics[height=.3in]{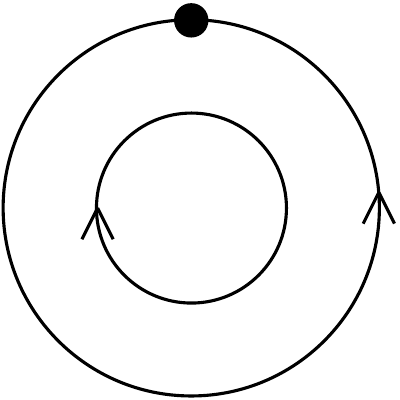}} - \raisebox{-5pt}{\includegraphics[height=.25in]{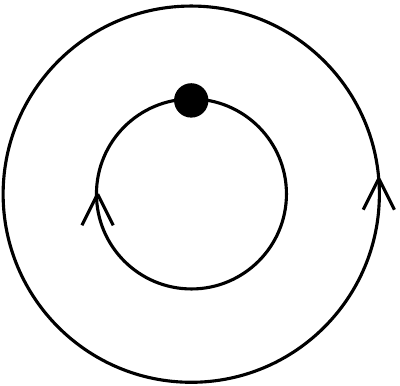}} = \ \  \left ( \begin{array}{cccc} 0 & -a & a & 0 \\ -1 & -h & 0 & a \\ 1 & 0 & h & -a \\ 0 & 1 & -1 &0 \end{array} \right ).$ 

We are now ready to write the complex $\Lambda_1.$

$$\Lambda_1:\qquad \raisebox{-270pt}{\includegraphics[height=4in]{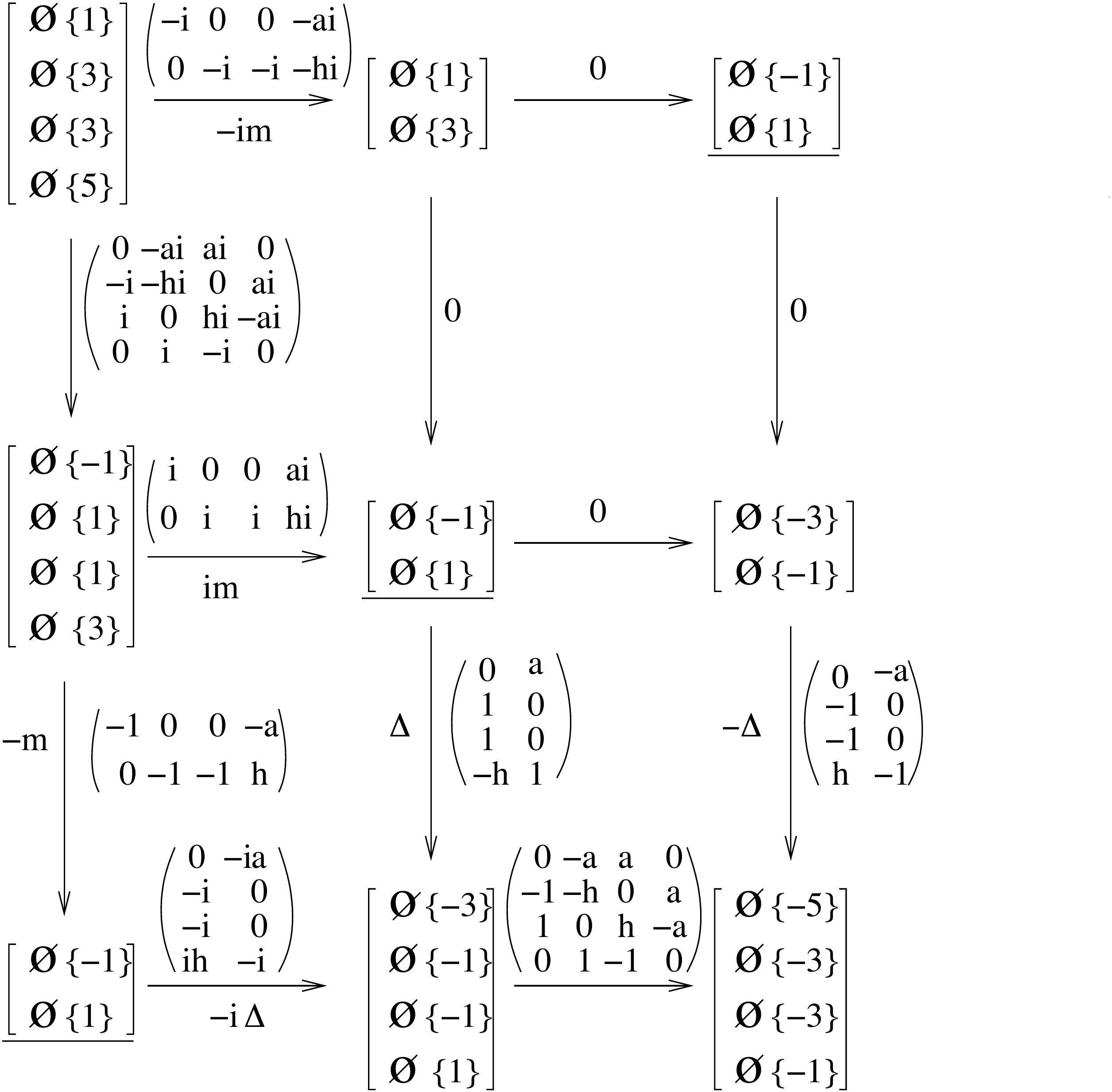}}$$

There are many isomorphisms in $\Lambda_1,$ thus we repeatedly apply Lemma~\ref{lemma:Gaussian elimination} until no invertible entries remain in any of the matrices. Adding relations $a = 0 = h$ and working over $\mathbb{C}$ (note that we can also use here $\mathbb{Q}(i)),$ any non-zero number is invertible. Henceforth we obtain the double complex $\Lambda_2$ given below, in which all matrices are $0.$

$$\Lambda_2:\qquad \raisebox{-120pt}{\includegraphics[height=1.8in]{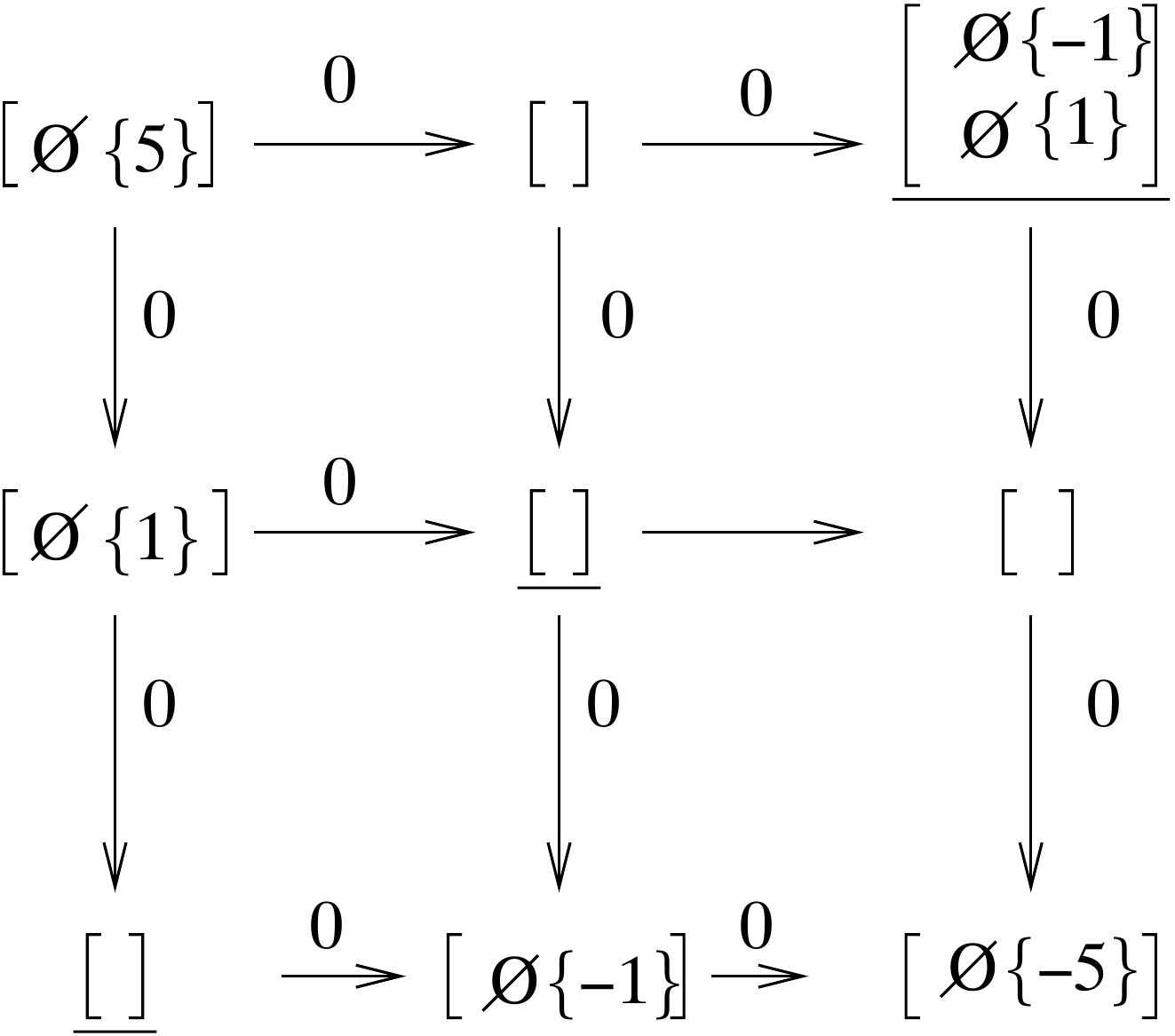}}$$

Taking the total complex of $\Lambda_2$ we arrive at the complex $\Lambda_3$ which is homotopy equivalent to $[K],$ where $K$ is the figure eight knot diagram we started with:
$$\Lambda_3: \raisebox{-10pt}{ \includegraphics[height=.4in]{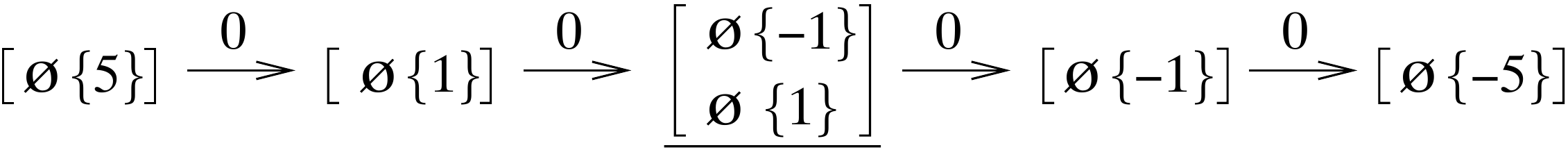}}.$$

We need now to apply the functor $\mathcal{F}_{\emptyset}$ defined in~\cite{CC2}, to obtain an ordinary complex with objects graded vector spaces over $\mathbb{C}$ and take its cohomology. Since $\mathcal{F}_{\emptyset}(\emptyset) = \mathbb{C} \{0\}$ we have:
$$\mathcal{F}_{\emptyset}(\Lambda_3): \mathbb{C}\{5\} \stackrel{0}{\longrightarrow} \mathbb{C}\{1\} \stackrel{0}{\longrightarrow} \underline{ \mathbb{C}\{-1\} \oplus \mathbb{C}\{1\}} \stackrel{0}{\longrightarrow}\mathbb{C}\{-1\} \stackrel{0}{\longrightarrow} \mathbb{C}\{-5\}.$$

Computing the cohomology of the complex $\mathcal{F}_{\emptyset}(\Lambda_3)$ we obtain that the cohomology group over $\mathbb{C}$ of the figure eight knot  is 6--dimensional, with generators in bidegrees $(-2,5), (-1,1), (0, -1), (0,1)$, $ (1, -1)$ and $(2,-5).$ That is, after adding the relations $a = 0 = h$, the cohomology groups of the figure eight knot are:
$$\mathcal{H}^{i,j}(K) \otimes_{\mathbb{Z}[i]} \mathbb{C}= \begin{cases}\mathbb{C} \ \ \ \ \text{for}\ \ \  (i,j) \in \{(-2,5), (-1,1), (0, -1), (0,1), (1, -1),(2,-5)\} \\ 0 \ \ \ \ \text{otherwise}. \end{cases}$$ 

\begin{remark}
When implementing the delooping and Gaussian elimination tools, there is a better way than computing the `half knot' invariants and putting the results together, as noted in~\cite[Section 7]{BN2}. Instead, one picks a certain crossing of the knot and adds one crossing at a time. After each crossing is added, the two tools are used to simplify the result before moving to the next crossing.
\end{remark}

\section{Universal $\mf{sl}(2)$ foam cohomology without dots}\label{sec:no dots}

In this section we describe a purely topological variant of our universal $\mf{sl}(2)$ dotted foam theory, in the sense that no dots are present on singular cobordisms. However, one has to pay a price for it, because this theory works if $2^{-1}$ exists in the ground ring. Moreover, we will also lose the beautiful geometric interpretation of certain algebra structure properties on $\mathcal{A}$. For example, relation (2D) is the geometric counterpart of the identity $X^2  = hX + a$ in $\mathcal{A},$ and the surgery formula (SF) corresponds to $\Delta(1) = 1 \otimes X + X \otimes 1 -h\, 1 \otimes 1;$ both of them are lost.     

The main idea is to use the \textit{genus reduction} formula from~\cite{CC2}, which for our purpose here, we write it in the following form:
\[\raisebox{-8pt}{\includegraphics[height=.3in]{capordot.pdf}} = \frac{1}{2} (\, \raisebox{-8pt}{\includegraphics[height=.3in]{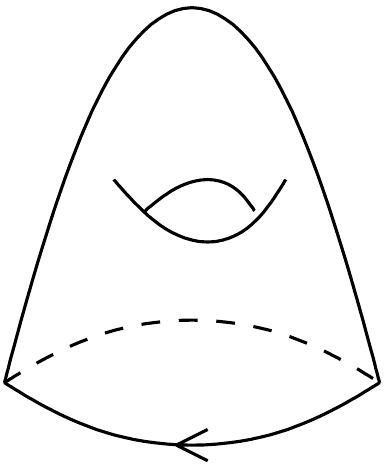}} + h\, \raisebox{-8pt}{\includegraphics[height=.3in]{capor.pdf}}\,  ).\]
We consider the ground ring $R: = \mathbb{Z}[\frac{1}{2}, i, a, h]$ and the $R$-module $\mathcal{A}' = R[X]/(X^2 - hX -a)$ with generators $1$ and $X,$ on which we consider the same Frobenius structure maps we used for $\mathcal{A}.$

The local relations $\ell$ = (2D, SF, S, UFO) from~\cite{CC2} are replaced by $\tilde{\ell} = (G2, \tilde{SF}, \tilde{S}, T,\tilde{UFO})$ given below. Note that the \textit{genus two reduction} formula $(G2)$ is the replacement of the \textit{two dots} relation (2D), and that the \textit{torus identity} $(T)$ corresponds to the second identity in (S).

$$\xymatrix@R=2mm{
(\tilde{SF}) \hspace{1cm} \raisebox{-22pt}{\includegraphics[height=.7in]{surgery.pdf}} = \displaystyle\frac{1}{2} \raisebox{-22pt}{\includegraphics[height=.7in]{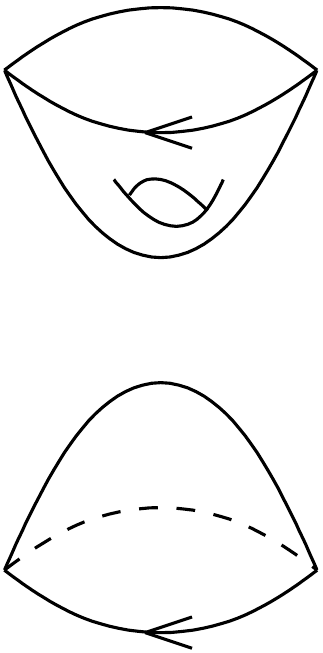}} + \frac{1}{2}\raisebox{-22pt}{\includegraphics[height=.7in]{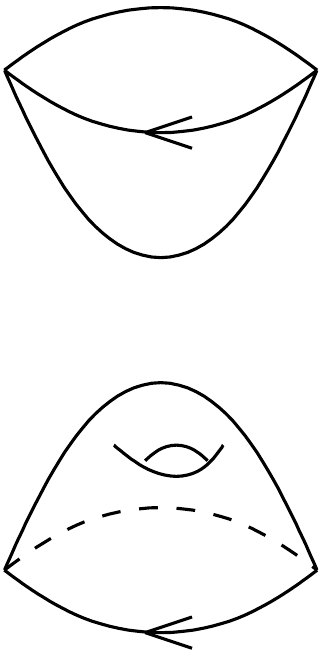}} \hspace{2cm}  \raisebox{-10pt}{\includegraphics[width=0.4in]{sph.pdf}}=0 &(\tilde{S}) \\
\raisebox{-13pt}{\includegraphics[height=.4in]{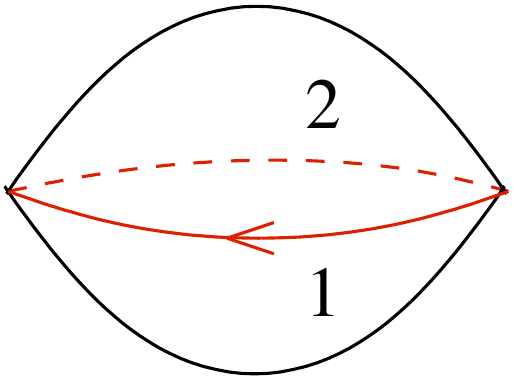}} = 0 =  \raisebox{-13pt}{\includegraphics[height=.4in]{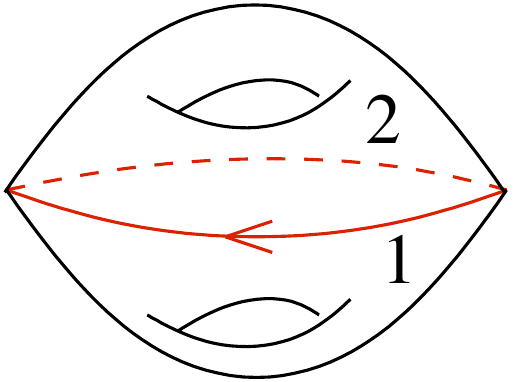}}\,, \quad \raisebox{-13pt}{\includegraphics[height=.4in]{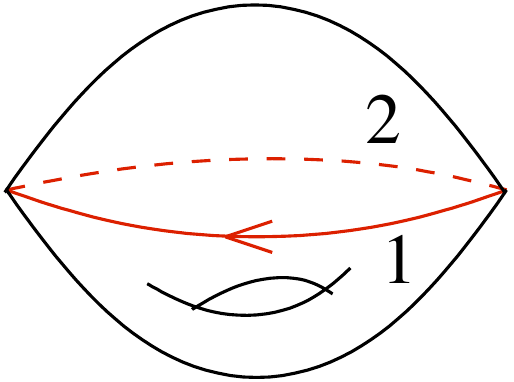}} = 2i\,, \quad \raisebox{-13pt}{\includegraphics[height=.4in]{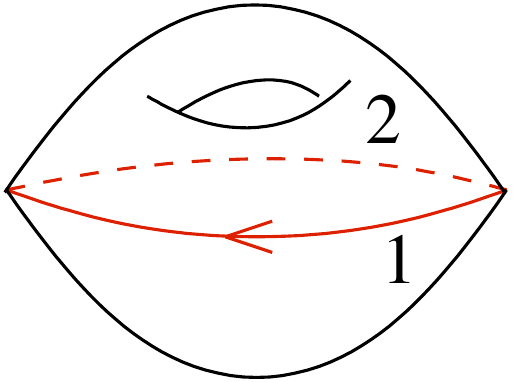}} = -2i & (\tilde{UFO})\\
\ (G2)\ \ \ \ \ \   
\raisebox{-8pt}{\includegraphics[height=.35in]{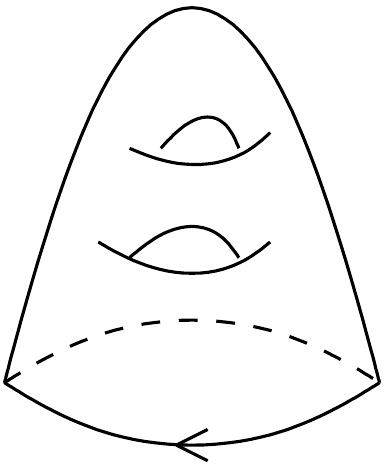}} = (h^2 + 4a)\, \raisebox{-8pt}{\includegraphics[height=.35in]{capor.pdf}} \hspace{2.3cm} \raisebox{-5pt}{\includegraphics[width=0.6in]{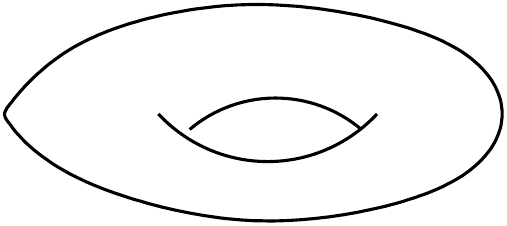}}=2 & (T)
}$$

The \textit{curtain identities} still hold, as well as the isomorphisms given in Lemma~\ref{lemma:isomorphisms}. With the new relations $\tilde{\ell},$ the \textit{cut-neck relation} (CN) and identity (RSC) become $(\tilde{CN})$ and $(\tilde{RSC})$ respectively, which are depicted below.
$$\xymatrix@R=2mm{
\raisebox{-22pt}{\includegraphics[width=0.3in, height=.8in]{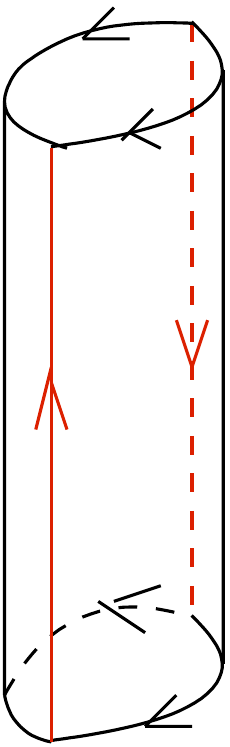}}= \displaystyle -\frac{i}{2}
\raisebox{-25pt}{\includegraphics[width=0.35in, height=0.85in]{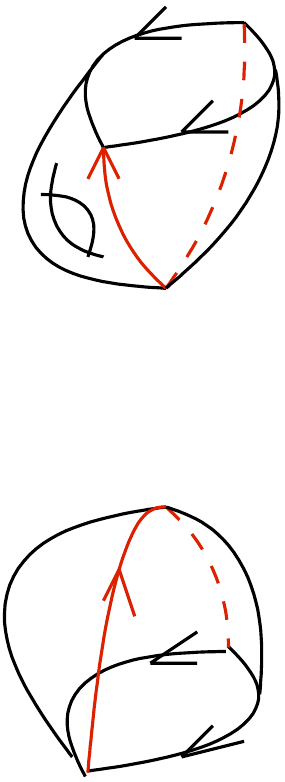}} - \frac{i}{2}
\raisebox{-25pt}{\includegraphics[width=0.35in, height=0.85in]{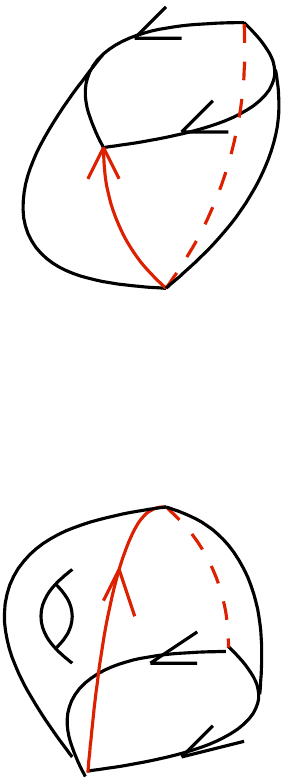}}
& (\tilde{CN}) \\
 \raisebox{-22pt}{\includegraphics[height=0.7in]{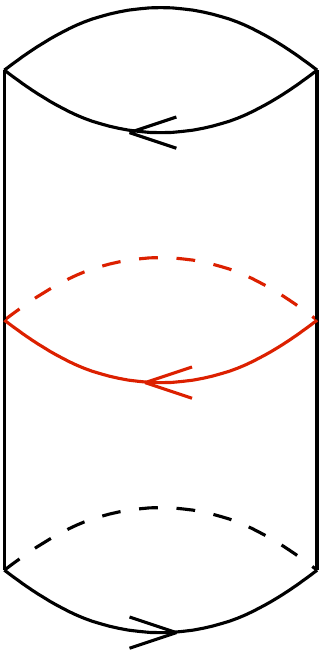}} = \displaystyle\frac{i}{2}
\raisebox{-22pt}{\includegraphics[height=0.7in]{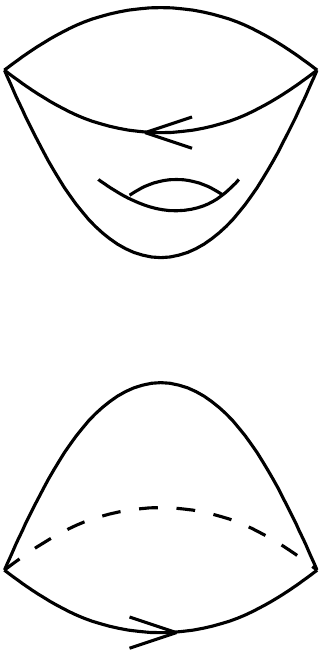}} - \frac{i}{2}
\raisebox{-22pt}{\includegraphics[height=0.7in]{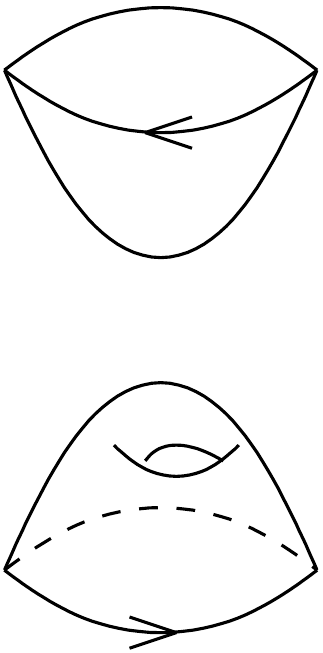}} & (\tilde{RSC}) 
}$$

Interestingly, relations~\ref{exchanging dots} are replaced here by:
\begin{align}
\raisebox{-8pt}{\includegraphics[height=.35in]{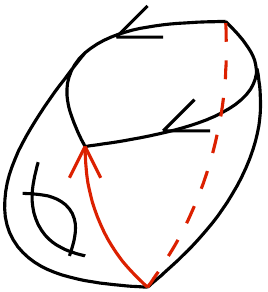}} + \raisebox{-8pt}{\includegraphics[height=.35in]{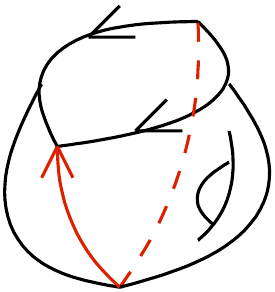}} = 0 \hspace{2cm} \raisebox{-8pt}{\includegraphics[height=.35in]{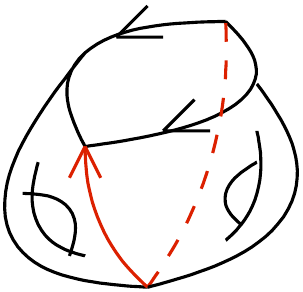}} = -(h^2 + 4a)\, \raisebox{-8pt}{\includegraphics[height=.35in]{cupsa.pdf}}
\end{align}
Therefore, handles can be exchanged between two neighboring facets of a foams, at the expense of a minus sign. 

We denote by $\textit{Foams}_{/\tilde{\ell}}$ the category of dotless foams modulo the new local relations $\tilde{\ell}.$ The tautological functor $\textit{F} \co \textit{Foams}_{/\tilde{\ell}}(\emptyset) \longrightarrow R$-Mod is defined on links as before, namely on objects is defined by $\mathcal{F}(\Gamma): = \Hom_{\textit{Foams}_{/\tilde{\ell}}(\emptyset)}(\emptyset, \Gamma)$ and on morphisms by composition on the left. Using the isomorphisms from Lemma~\ref{lemma:isomorphisms}, any resolution of a link diagram is isomorphic in $\textit{Foams}_{/\tilde{\ell}}$ with a disjoint union of oriented loops and closed webs with two vertices. Note that also a closed web with two vertices is isomorphic in $\textit{Foams}_{/\tilde{\ell}}$ to an oriented loop. Repeatedly applying the local relations $\tilde{\ell}$ we get to foams in which every component has one boundary curve (which is either \raisebox{-5pt}{\includegraphics[height=.18in]{unknot-clockwise.pdf}} or \raisebox{-3pt}{\includegraphics[height=.16in]{circle2sv.pdf}}) and is either of genus $0$ or of genus $1.$ In particular, $\mathcal{F}(\raisebox{-5pt}{\includegraphics[height=.18in]{unknot-clockwise.pdf}})$ is an $R$-module generated by  $v_{-} = \raisebox{-5pt}{\includegraphics[height=.2in]{cupor.pdf}}$ and $v_{+} = \frac{1}{2} \,\raisebox{-5pt}{\includegraphics[height=.2in]{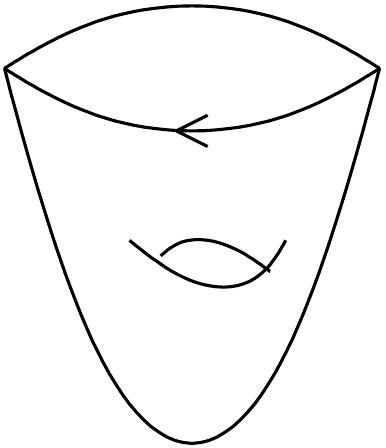}} + \frac{1}{2} h\, \raisebox{-5pt}{\includegraphics[height=.2in]{cupor.pdf}}$ and $\mathcal{F}(\raisebox{-3pt}{\includegraphics[height=.16in]{circle2sv.pdf}})$ is an $R$-module generated by  $v'_{-} = \raisebox{-5pt}{\includegraphics[height=.2in]{cupsa.pdf}}$ and $v'_{+} = \frac{1}{2}\, \raisebox{-5pt}{\includegraphics[height=.2in]{cupsa-g1.pdf}} + \frac{1}{2} h \,\raisebox{-5pt}{\includegraphics[height=.2in]{cupsa.pdf}}.$

It is easy to see that there are degree-preserving $R$-module isomorphisms $\mathcal{F}(\raisebox{-5pt}{\includegraphics[height=.18in]{unknot-clockwise.pdf}}) \cong \mathcal{A} \cong \mathcal{F}(\raisebox{-3pt}{\includegraphics[height=.16in]{circle2sv.pdf}})$ that map $v_{-}, v'_{-}$ to $1$ and $v_{+}, v'_{+}$ to $X.$ Moreover, one can verify that with this basis for $\mathcal{F}(\raisebox{-5pt}{\includegraphics[height=.18in]{unknot-clockwise.pdf}})$ the tautological functor $\mathcal{F}$ behaves---on links---in the same manner as the TQFT functor $\mathsf{F} \otimes \mathbb{Z}[\frac{1}{2}]$ defined by the Frobenius system corresponding to $\mathcal{A}',$  where $\mathsf{F}$ is the TQFT functor used in~\cite{CC2}. Furthermore, $\mathsf{F} \otimes \mathbb{Z}[\frac{1}{2}]$ satisfies relations $\tilde{\ell}.$ 

For example, to show that $\mathsf{F} \otimes \mathbb{Z}[\frac{1}{2}]$ satisfies relation $(G2)$ we need to check $\epsilon \circ m \circ \Delta \circ m \circ \Delta =  (h^2 + 4a) \epsilon$ (recall that we read cobordisms from bottom to top). For $(\tilde{SF})$ we need to verify that $\id = \frac{1}{2} (m \circ \Delta \circ \iota \circ \epsilon + \iota \circ \epsilon \circ m \circ \Delta).$ Both equalities hold. Moreover, $\mathsf{F} \otimes \mathbb{Z}[\frac{1}{2}]$ satisfies relations $(\tilde{S})$ and $(T)$ since $\epsilon \circ \iota = 0$ and $\epsilon \circ m \circ \Delta \circ \iota = 2.$ Finally, a singular circle corresponds to multiplication by $\pm i$ endomorphism of $\mathcal{A}'$ (see~\cite{CC2}), and we let the details of checking that $\mathsf{F} \otimes \mathbb{Z}[\frac{1}{2}]$ satisfies relations $(\tilde{UFO})$ to the reader.   

On the other hand we have:
\[\raisebox{-8pt}{\includegraphics[height=.3in]{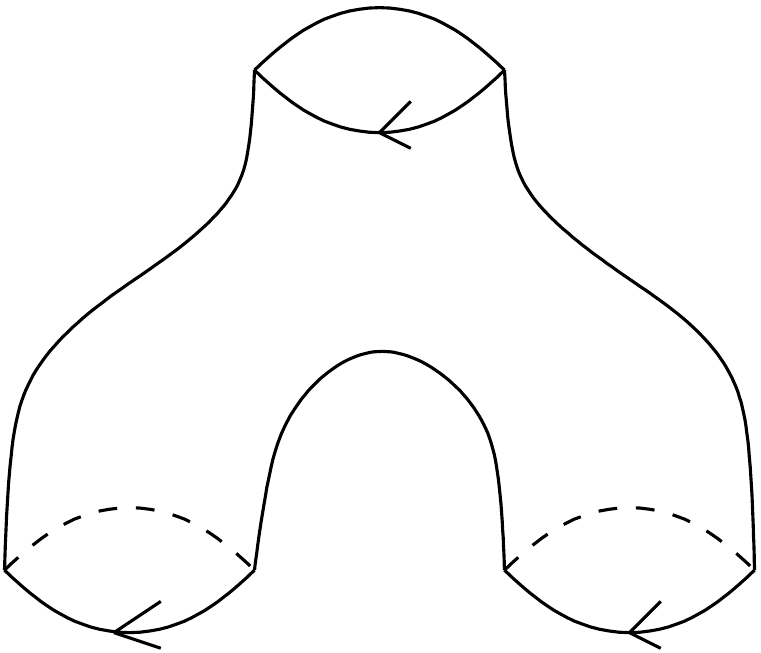}} \circ (\,\raisebox{-5pt}{\includegraphics[height=.2in]{cupor.pdf}} \otimes \raisebox{-5pt}{\includegraphics[height=.2in]{cupor.pdf}}\,) = \raisebox{-5pt}{\includegraphics[height=.2in]{cupor.pdf}} \]
\[\raisebox{-8pt}{\includegraphics[height=.3in]{pair-of-pants.pdf}} \circ [\,\raisebox{-5pt}{\includegraphics[height=.2in]{cupor.pdf}} \otimes (\,\frac{1}{2}\,\raisebox{-5pt}{\includegraphics[height=.2in]{cupor-g1.pdf}} + \frac{h}{2}\, \raisebox{-5pt}{\includegraphics[height=.2in]{cupor.pdf}}  \,)]  = \frac{1}{2}\,\raisebox{-5pt}{\includegraphics[height=.2in]{cupor-g1.pdf}} + \frac{h}{2} \,\raisebox{-5pt}{\includegraphics[height=.2in]{cupor.pdf}} =  \raisebox{-8pt}{\includegraphics[height=.3in]{pair-of-pants.pdf}} \circ [ (\,\frac{1}{2}\,\raisebox{-5pt}{\includegraphics[height=.2in]{cupor-g1.pdf}} + \frac{h}{2}\, \raisebox{-5pt}{\includegraphics[height=.2in]{cupor.pdf}}\,) \otimes \raisebox{-5pt}{\includegraphics[height=.2in]{cupor.pdf}}  \,]  \]

\[\raisebox{-8pt}{\includegraphics[height=.3in]{pair-of-pants.pdf}} \circ [ (\,\frac{1}{2}\,\raisebox{-5pt}{\includegraphics[height=.2in]{cupor-g1.pdf}} + \frac{h}{2}\, \raisebox{-5pt}{\includegraphics[height=.2in]{cupor.pdf}}\,) \otimes (\,\frac{1}{2}\,\raisebox{-5pt}{\includegraphics[height=.2in]{cupor-g1.pdf}} + \frac{h}{2}\, \raisebox{-5pt}{\includegraphics[height=.2in]{cupor.pdf}}\,)] =h\,(\, \frac{1}{2}\,\raisebox{-5pt}{\includegraphics[height=.2in]{cupor-g1.pdf}} + \frac{h}{2}\, \raisebox{-5pt}{\includegraphics[height=.2in]{cupor.pdf}}\,) + a\, \raisebox{-5pt}{\includegraphics[height=.2in]{cupor.pdf}}  \]

which are equivalent to $m(1 \otimes 1) =1, m(1 \otimes X) = X = m(X \otimes 1), m(X \otimes X) =hX+ a.$

Moreover, \[\raisebox{-5pt}{\includegraphics[height=.2in]{capor.pdf}} \circ \raisebox{-5pt}{\includegraphics[height=.2in]{cupor.pdf}}  = \raisebox{-5pt}{\includegraphics[height=.2in]{sph.pdf}} \ \ \ \text{and} \ \ \ \ \raisebox{-5pt}{\includegraphics[height=.2in]{capor.pdf}} \circ ( \,\frac{1}{2} \, \raisebox{-5pt}{\includegraphics[height=.2in]{cupor-g1.pdf}} + \frac{1}{2} \,\raisebox{-5pt}{\includegraphics[height=.2in]{cupor.pdf}} \,) = \frac{1}{2}\, \raisebox{-3pt}{\includegraphics[height=.15in]{torus.pdf}} + \frac{h}{2} \,\raisebox{-5pt}{\includegraphics[height=.2in]{sph.pdf}} \]

which corresponds to $\epsilon(1) = 0, \epsilon(X) = 1.$ Below we recover the rules for $\Delta.$

\[ \raisebox{-8pt}{\includegraphics[height=.3in]{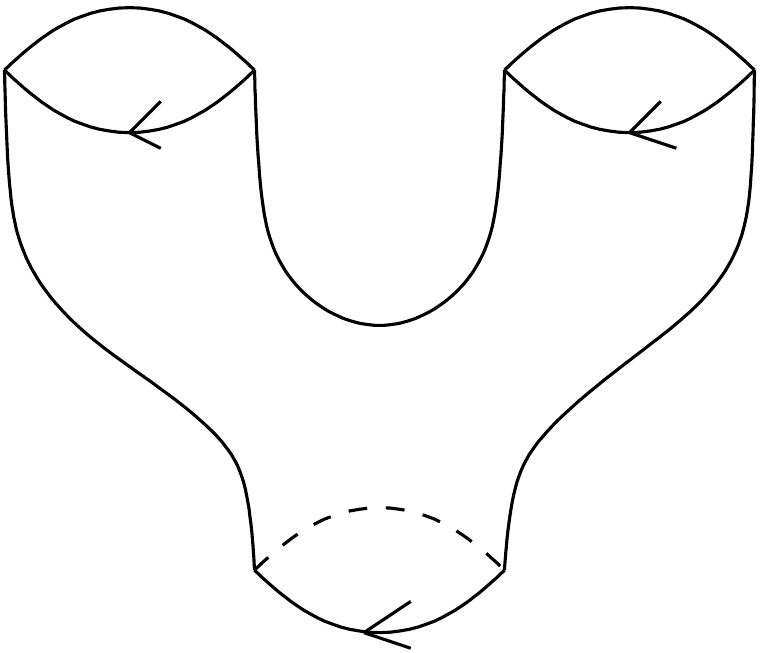}} \circ \raisebox{-5pt}{\includegraphics[height=.2in]{cupor.pdf}} \stackrel{(\tilde{SF})}{=} \frac{1}{2} \,\raisebox{-5pt}{\includegraphics[height=.2in]{cupor-g1.pdf}} \ \ \raisebox{-5pt}{\includegraphics[height=.2in]{cupor.pdf}} +  \frac{1}{2} \,\raisebox{-5pt}{\includegraphics[height=.2in]{cupor.pdf}} \ \ \raisebox{-5pt}{\includegraphics[height=.2in]{cupor-g1.pdf}} = (\,\frac{1}{2}\,\raisebox{-5pt}{\includegraphics[height=.2in]{cupor-g1.pdf}} + \frac{h}{2}\, \raisebox{-5pt}{\includegraphics[height=.2in]{cupor.pdf}} \,)\,\raisebox{-5pt}{\includegraphics[height=.2in]{cupor.pdf}}  + \raisebox{-5pt}{\includegraphics[height=.2in]{cupor.pdf}}\,  (\,\frac{1}{2}\,\raisebox{-5pt}{\includegraphics[height=.2in]{cupor-g1.pdf}} + \frac{h}{2}\, \raisebox{-5pt}{\includegraphics[height=.2in]{cupor.pdf}}  \,) - h \, \raisebox{-5pt}{\includegraphics[height=.2in]{cupor.pdf}} \, \raisebox{-5pt}{\includegraphics[height=.2in]{cupor.pdf}}\]

\[ \raisebox{-8pt}{\includegraphics[height=.3in]{upsidedown-pants.pdf}} \circ (\,\frac{1}{2}\,\raisebox{-5pt}{\includegraphics[height=.2in]{cupor-g1.pdf}} + \frac{h}{2} \,\raisebox{-5pt}{\includegraphics[height=.2in]{cupor.pdf}} \,)  \stackrel{(\tilde{SF}), (G2)}{=} (\, \frac{1}{2}\,\raisebox{-5pt}{\includegraphics[height=.2in]{cupor-g1.pdf}} + \frac{h}{2} \,\raisebox{-5pt}{\includegraphics[height=.2in]{cupor.pdf}}\,) \, (\,\frac{1}{2}\,\raisebox{-5pt}{\includegraphics[height=.2in]{cupor-g1.pdf}} + \frac{h}{2} \,\raisebox{-5pt}{\includegraphics[height=.2in]{cupor.pdf}} \,) + a \,\raisebox{-5pt}{\includegraphics[height=.2in]{cupor.pdf}} \,\raisebox{-5pt}{\includegraphics[height=.2in]{cupor.pdf}} \]

These say that $\Delta (1) = X \otimes 1 + 1 \otimes X - h\, 1 \otimes 1,\, \Delta (X) = X \otimes X + a\, 1 \otimes 1$ respectively.

Everything we did with $\textit{Foams}_{/\ell}$ can be appropriately modified for $\textit{Foams}_{/\tilde{\ell}},$ including the invariance under Reidemeister moves and functoriality property.

\end{document}